\documentclass[11pt, twoside]{amsart}

\usepackage[utf8]{inputenc}
\usepackage[T1]{fontenc}
\usepackage{amssymb,amsmath,amstext}
\usepackage{hyperref, graphicx}

\newtheorem{theor}{Theorem}[section]
\newtheorem{lem}[theor]{Lemma}
\newtheorem{defin}[theor]{Definition}

\newtheorem{prop}[theor]{Proposition}

\newtheorem{exam}[theor]{Example}

\newtheorem{cor}[theor]{Corollary}
\newtheorem{rem}[theor]{Remark}

\newtheorem{fact}[theor]{Fact}

\newtheorem{assump}[theor]{Assumption}

\numberwithin{equation}{section}

\newcommand{\acl}{\mathrm{acl}}

\newcommand{\tp}{\mathrm{tp}}

\newcommand{\cl}{\mathrm{cl}}

\newcommand{\es}{\emptyset}

\newcommand{\rk}{\mathrm{rk}}

\newcommand{\nts}{\negthickspace}
\newcommand{\uhrc}{\nts \upharpoonright \nts}

\newcommand{\meq}{^{\mathrm{eq}}}

\newcommand{\mcA}{\mathcal{A}}
\newcommand{\mcB}{\mathcal{B}}

\newcommand{\mcG}{\mathcal{G}}

\newcommand{\mcM}{\mathcal{M}}
\newcommand{\mcN}{\mathcal{N}}

\newcommand{\mcP}{\mathcal{P}}

\newcommand{\mcS}{\mathcal{S}}

\newcommand{\mbC}{\mathbf{C}}

\newcommand{\mbbP}{\mathbb{P}}
\newcommand{\mbbN}{\mathbb{N}}

\newcommand{\indn}{\raisebox{-2pt}[5pt][0pt]{$\smile$} \hspace*{-6.8pt}\raisebox{3pt}[5pt][0pt]{$|^n$} }

\newcommand{\nindn}{\raisebox{-2pt}[5pt][0pt]{$\smile$} 
\hspace*{-6.8pt}\raisebox{3pt}[5pt][0pt]{$|^n$}\hspace*{-12pt}
\raisebox{3pt}[5pt][0pt]{$\diagup$} }

\newcommand{\indo}{\raisebox{-2pt}[5pt][0pt]{$\smile$} \hspace*{-6.8pt}\raisebox{3pt}[5pt][0pt]{$|^0$} }

\newcommand{\nindo}{\raisebox{-2pt}[5pt][0pt]{$\smile$} 
\hspace*{-6.8pt}\raisebox{3pt}[5pt][0pt]{$|^0$}\hspace*{-12pt}
\raisebox{3pt}[5pt][0pt]{$\diagup$} }

\newcommand{\rng}{\mathrm{rng}}

\title[A characterization of the finite model property]
{A characterization of the finite model property for $\omega$-categorical theories}

\author{Vera Koponen}

\address{Vera Koponen, Department of Mathematics, Uppsala University, Sweden.}

\email{vera.koponen@math.uu.se}

\date{12 October, 2026}

\begin{document}

\maketitle

\begin{abstract}
A first-order theory $T$ has the {\em finite model property} if  
every first-order sentence that is implied by $T$ has a finite model.
We give a characterization of when a complete $\omega$-categorical theory $T$ has the finite model property.
The characterization makes it possible to transfer stability theoretic results in \cite{KopInd}
to certain classes of finite structures.
This approach also gives information concerning properties that a sequence of finite ``base structures'' 
of ever larger size must have
in order for its random expansions to a larger vocabulary to have a chance to satisfy a convergence law
for first-order logic.
\end{abstract}

\section{Introduction}

\noindent
A first-order theory $T$ has the {\em finite model property (FMP)} if every first-order sentence 
$\varphi$ such that $T \models \varphi$ has a finite model. If in addition, for every such $\varphi$
there is a finite substructure $\mcA$
of some model of $T$ in which $\varphi$ is true, then we say that $T$ has the {\em finite submodel property (FSMP)}.
If $T$ has the F(S)MP then, in order to decide if $T \models \varphi$, it suffices to 
search among finite (sub)structures (of a model of $T$) which can, potentially, have consequences for the decidability
or computational complexity of the set of consequences of $T$.
Also, if $T$ has the FMP, then no first-order sentence that is a consequence of $T$ can 
express that ``all of my models are infinite''. So in particular, no consequence of a theory with the FMP
can express that some formula has the strict order property.

Results about the F(S)MP of a theory have typically been obtained via a 
rather detailed structural analysis of the models of the theory.
With this approach it has been proved that stable homogeneous structures have the FSMP 
(see e.g. Lachlan's survey \cite{Lach97}),
that $\omega$-categorical and $\omega$-stable theories (hence totally cateogorical theories)
have the FSMP (Cherlin, Harrington and Lachlan \cite{CHL}), and that smoothly approximable theories/structures have the FSMP (Kantor, Liebeck, and Macpherson \cite{KLM}, and Cherlin and Hrushovski \cite{CH}).
By probability theoretic arguments it follows that the random graph (also called the Rado graph), and more 
generally the random structure, has the FSMP (Glebskii, Kogan, Liogonkii and Talanov \cite{Gle} and
Fagin \cite{Fag}).
Later, the essential ideas of the probability theoretic arguments have been generalized to more general contexts in which
stability theoretic concepts have also been crucial to prove the FSMP (Djordjevi\'{c} \cite{Djo06a, Djo06b}, 
 Koponen \cite{Kop09}, 
 Ahlman \cite{Ahl}, 
and Donnay Hill \cite{Hill17}).
All above mentioned results concern $\omega$-categorical theories.
$\omega$-Categorical theories for which it is unknown if it has FMP include
the theory of the Fra\"{i}ss\'{e} limit of the class of all triangle-free graphs (see e.g. \cite{Che}),
and the theories of some structures obtained by Hrushovski style constructions (see e.g. \cite{Eva} and
Example~\ref{example 2 of first assumption} below).

In this study we will show that a complete $\omega$-categorical theory $T$ has the FMP if and only if 
$T$ consists of every sentence $\varphi$ that is true in all sufficiently large structures in a class $\mbC$
of finite structures with a nice behaviour with respect to a partial closure operator 
(Theorem~\ref{characterization of fmp}).
The characterization of the FMP of a complete $\omega$-categorical theory in terms of an associated class $\mbC$
of finite structures may be a useful guideline in the direction of understanding if concrete $\omega$-categorical 
theories have the FMP.
We also give some information about how the FSMP is related to the ``hereditary property for closed substructures''
(Theorem~\ref{HPC and fsmp}).
The original concern, eventually leading to this article, was to understand what kind of sequences of ever larger finite structures
would be suitable as ``base structures'' for random expansions to a richer vocabulary.
In this direction Corollary~\ref{0-1 law} 
is relevant for investigations of logical convergence laws of random expansions of finite
base structures, as explained in Remark~\ref{remark on expansions};
Example~\ref{expansions of the examples} also touches on convergence laws of expansions.

Now we turn to the second topic of this study, which as we will se, is related to the FMP.
Stability theory \cite{She} and its generalizations in many directions
(simple theories \cite{Kim-book}, rosy theories \cite{EO},
NIP theories \cite{Sim_book} etcetera) is, arguably, nowadays the
main theoretical framework with which first-order theories and their {\em infinite} models 
are analyzed.
Given this impact of stability theory on infinite structures it is of course natural to wonder if it is possible
to make sense of the ideas and methods of stability theory in the context of {\em finite} structures.
Since a complete first-order theory $T$ with a finite model has only one model up to isomorphism it is
natural to consider, not a complete first-order theory, but a class of finite structures $\mbC$ 
(which may possibly be the set of models of some incomplete first-order theory).
Work in this direction has been conducted by 
Baldwin \cite{Bal00} (a survey),
Baldwin and Benedikt \cite{BB},
Hyttinen \cite{Hyt00, Hyt02, Hyt15},
Djordjevi\'{c} \cite{Djo01},
Baldwin and Lessmann \cite{BL},
Koponen \cite{Kop09b},
and Donnay Hill \cite{Hill13, Hill14}.
Most of the mentioned studies have focused on amalgamation properties for the members of a class $\mbC$
of finite structures, often with respect to a fragment of first-order logic, like $L^k$
(first-order formulas with at most $k$ variables). 
Typically the assumed properties imply the existence of a nice limit structure or limit theory, with 
good stability theoretic properties provided that the members of $\mbC$ have good stability theoretic
properties relative to the fragment of first-order logic considered.

In this study we do not consider amalgamation properties, but an extension property relative to the 
partial closure operator. 
The assumed properties of the class $\mbC$ of finite structures will imply that for every first-order
sentence $\varphi$, either $\varphi$ or $\neg\varphi$ is true in all sufficiently large members of $\mbC$.
It also follows that the set $T_\mbC$ of all sentences that are true in all sufficiently large members of 
$\mbC$ is an $\omega$-categorical theory.
We will not assume anything of a ``stability theoretic nature'' about $\mbC$ and we do not get any stability theoretic
information about $T_\mbC$. But as shown in \cite{KopInd}, for all $n \in \mbbN$, a notion of `$n$-independence',
denoted $\indn$, is associated to every $\omega$-categorical theory, hence to $T_\mbC$.
$n$-Independence satisfies the usual properties of an independence relation except possibly for the symmetry
property. Since the relation `$\bar{x} \underset{\bar{z}}{\indn} \bar{y}$' is definable and
 the properties of $\indn$ (e.g. transitivity) for finite sets can be expressed by first-order formulas
it follows that all sufficiently large $\mcA \in \mbC$ will inherit a notion of $n$-independence
(on small subsets relative to the cardinality of $\mcA$) with all the
usual properties of an independence relation except possibly for the symmetry property.
But in \cite{KopInd} it is shown that the symmetry property of $\indn$ with respect to $\omega$-categorical
theories is equivalent to the ``exchange property for elements of $n$-rank 1''.
Again, using the definability of these properties and that $T_\mbC$ consists of all sentences that are true
in all sufficiently large structures in $\mbC$, it follows that we get a criterion for symmetry of $\indn$ in
structures in $\mbC$.
We explain the transfer of $\indn$ from $T_\mbC$ to $\mbC$ in
Section~\ref{independence in C}.

\subsection*{Basic notation and terminology}\label{preliminaries}

Structures are denoted by calligraphic letters, for example $\mcA, \mcB$, $\mcM$, and their universes by 
the corresponding noncalligraphich letters $A, B, M$. By $\acl_\mcM$ we dnote the algebraic closure operator in
the structure $\mcM$. If $A \subseteq M$ then $\mcM \uhrc A$ denotes the substructure of $\mcM$ that
is generated by $A$.
By a {\em closure operator} on a set $S$ we mean a function $\cl : \mcP(S) \to \mcP(S)$ such that
\begin{itemize}
\item $X \subseteq \cl(X)$ for all $X \in \mcP(S)$,
\item if $X \subseteq Y \in \mcP(S)$ then $\cl(X) \subseteq \cl(Y)$, and
\item $\cl(\cl(X)) = \cl(X)$ for all $X \in \mcP(S)$.
\end{itemize}
Suppose that, for some vocabulary $\tau$, $\mcM$ and $\mcN$ are $\tau$-structures and
$\cl_\mcM$ and $\cl_\mcN$ are closure operators on $M$ and $N$, respectively.
For $k \in \mbbN$, $\bar{a} \in M^k$ and $\bar{b} \in N^k$, we say that 
{\em $\bar{a}$ has the same closure type as $\bar{b}$}, denoted $(\mcM, \bar{a}) \equiv_{\cl} (\mcN, \bar{b})$,
if there is an isomorphism $f$ from $\mcM \uhrc \cl_\mcM(\bar{a})$ to $\mcN \uhrc \cl_\mcN(\bar{b})$
such that $f(\bar{a}) = \bar{b}$ and for all $C \subseteq \cl_\mcM(\bar{a})$ and $c \in \cl_\mcM(\bar{a})$,
$c \in \cl_\mcM(C)$ if and only if $f(c) \in \cl_\mcN(f(C))$.

$\mbbN$ denotes the set of natural numbers (including 0).
If $\bar{s}$ is a sequence $\rng(\bar{s})$ denotes the set of elements appearing as entries in $\bar{s}$,
and $|\bar{s}|$ is the length of $\bar{s}$.
For a set $X$ and $k \in \mbbN$, let $\mcP_k(X)$ denote the set of subsets of $X$ of cardinality at most $k$.

\section{Finite structures with a partial closure operator}
\label{finite structures with cl}

\noindent
In all of Section~\ref{finite structures with cl} 
we fix a signature $\tau$ and a class $\mbC$ of {\em finite} $\tau$-structures.
We will, step by step, add more assumptions and derive consequences from them.

\subsection{Finite structures with a partial closure operator}

{\bf \em For the rest of Section~\ref{finite structures with cl} we make the following assumption.}

\begin{assump}\label{assumptions on the closure}{\rm

\begin{enumerate}
\item There is no finite bound on the cardinalities of members of~$\mbC$.

\item For each $k < \omega$ there is $n_k < \omega$ such that if
 $\mcA \in \mbC$ and $|A| \geq n_k$, then there is $\cl_\mcA : \mcP_k(A) \to \mcP(A)$, 
 which we call a {\em partial closure operator (on A)},
such that 
\begin{enumerate}
\item $X \subseteq \cl_\mcA(X)$ for all $X \in \mcP_k(A)$,
\item if $X \subseteq Y \in \mcP_k(A)$ then $\cl_\mcA(X) \subseteq \cl_\mcA(Y)$, and
\item $\cl_\mcA(\cl_\mcA(X)) = \cl_\mcA(X)$ if $X \in \mcP_k(A)$ and $\cl_\mcA(X) \in \mcP_k(A)$.
\end{enumerate}

\item There is $F_\mbC : \mbbN \to \mbbN$ such that if $\mcA \in \mbC$, $|A| \geq n_k$ and $X \in \mcP_k(A)$,
then $|\cl_\mcA(X)| \leq F_\mbC(|X|)$.

\item For all $k < \omega$ there is a first-order $\tau$-formula $\xi_k(x_1, \ldots, x_{k+1})$ such that 
if $\mcA \in \mbC$, $|A| \geq n_k$ and $a_1, \ldots, a_{k+1} \in A$, then
$a_{k+1} \in \cl_\mcA(\{a_1, \ldots, a_k\})$ if and only if $\mcA \models \xi_k(a_1, \ldots, a_{k+1})$.

\item For all $\mcA \in \mbC$ and $X \subseteq A$ such that $\cl_\mcA(X)$ is defined,
$\mcA \uhrc \cl_\mcA(X)$ has universe $\cl_\mcA(X)$.

\item For all $k < \omega$ the class
\[
\big\{ \mcA \uhrc \cl_\mcA(X) : \mcA \in \mbC, X \in \mcP_k(A) \}
\]
has only finitely many isomorphism types.
\end{enumerate}
}\end{assump}

\noindent
If $\mcA \in \mbC$ and $a_1, \ldots, a_k \in A$ then `$\cl_\mcA(a_1, \ldots, a_k)$' means the same as
$\cl_\mcA(\{a_1, \ldots, a_k\})$.
Note that if $\tau$ has only relation symbols then condition~(5) above is automatically satisfied,
and if $\tau$ is finite then condition~(6) above is automatically satisfied.

Below a few examples of $\mbC$ that satisfy 
Assumption~\ref{assumptions on the closure}
are given.

\begin{exam}\label{example 1 of first assumption}  {\bf (DAGs)}
{\rm
By a {\em directed acyclic graph (DAG)} we mean a finite $\tau$-structure $\mcG$, 
where $\tau = \{R\}$ and $R$ is a binary relation symbol, such that there is no directed cycle in $\mcG$
and no loop (i.e. element $a$ with $R(a, a)$).
Choose any positive $s, t \in \mbbN$ and let $\mbC$ be the class of all finite DAGs $\mcG$ such that 
\begin{enumerate}
\item every vertex of $\mcG$ has at most $s$ parents, and
\item the height (the longest directed path) of $\mcG$ is at most $t$.
\end{enumerate}
For all $k \in \mbbN$ let $n_k = k$.
For all $\mcG \in \mbC$ and $X \subseteq A$ let $\cl_\mcG(X)$ be the union of $X$ and the
set of ancestors of elements of $X$.
Define $F_\mbC(x) = x \cdot s^t$. 
It is no straightforward to verify that $\mbC$ satisfies
Assumption~\ref{assumptions on the closure}.
}\end{exam}

\begin{exam}\label{example 2 of first assumption} {\bf (Structures with a predimension)} 
{\rm
Let $\tau = \{R\}$ where $R$ has arity $r \geq 2$.
Let $\alpha, \beta$ be positive integers.
For every finite $\tau$-structure $\mcA$ and $X \subseteq A$ define
\[
\delta_\mcA(X) = \alpha |X| - \beta|R^\mcA \cap X^r|.
\]
Let $f$ be a differentiable function from the nonnegative reals to the nonnegative reals such that
$\lim_{x\to\infty} f(x) = \infty$, $f'(x) \leq 1$,
$f'(x)$ is monotone decreasing, and $0 < f'(x) \leq 1/x$ for all nonnegative $x$.
Let $\mbC$ be the set of all finite $\tau$-structures $\mcA$ such that for all $X \subseteq A$,
$\delta(X) \geq f(|X|)$.
For every $\mcA \in \mbC$ define $\cl_\mcA : \mcP(A) \to \mcP(A)$ so that 
for all $X \subseteq A$, $X \subseteq \cl_\mcA(X)$ and for all $a \in A \setminus X$, 
$a \in \cl_\mcA(X)$ if and only if $\delta(\mcA \uhrc (X \cup \{a\})) < \delta(\mcA)$.
It follows from results in, for example, \cite{Eva} that conditions~(1)--~(3) of
Assumption~\ref{assumptions on the closure}
are satisfied.
Concerning condition~(4) of the same assumption:
As $\xi_k(x_1, \ldots, x_{k+1})$ we can take the disjunction of all the atomic diagrams 
$\theta_\mcA(x_1, \ldots, x_{k+1})$ of $\tau$-structures $\mcA \in \mbC$ with universe $A = \{a_1, \ldots, a_{k+1}\}$
such that $\delta_\mcA(A) < \delta_\mcA(\{a_1, \ldots, a_k\})$.
It follows that Assumption~\ref{assumptions on the closure}
is satisfied for this $\mbC$.
}\end{exam}

\begin{exam}\label{example of pregeometries 1} {\bf (Pregeometries)} {\rm
Let $\mbC$ be a class of finite pregeometries (matroids), so each member of $\mbC$ has the form
$\mcA = (A, \cl_\mcA)$ where $\cl_\mcA : \mcP(A) \to \mcP(A)$ is a closure operator,
and for all $X, Y \in \mcP(A)$,
if $a \in \cl_\mcA(X \cup \{b\}) \setminus \cl_\mcA(X)$ then $b \in \cl_\mcA(X \cup \{a\})$.
Also assume that there is no finite bound on the cardinalities of members of $\mbC$
and that for every $k \in \mbbN$ there is $m_k \in \mbbN$ such that $|\cl_\mcA(X)| \leq m_k$
whenever $\mcA \in \mbC$ and $X \in \mcP_k(A)$.
Let $\tau = \{P_0, P_1, P_2, \ldots, \}$, where $P_k$ is a relation symbol with arity $k+1$.
Then each $\mcA = (A, \cl_\mcA) \in \mbC$ can be viewed as a $\tau$-structure by
interpreting each $P_k$ so that 
$a_{k+1} \in \cl_\mcA(a_1, \ldots, a_k)$ if and only if $\mcA \models P_k(a_1, \ldots, a_{k+1})$.
Now Assumption~\ref{assumptions on the closure} is satisfied.
}\end{exam}

\begin{defin}\label{definition of closure type} 
{\rm
(i) A {\em closure diagram} in the variables $(x_1, \ldots, x_n)$ is a formula $\varphi(x_1, \ldots, x_n)$
such that
\begin{itemize}
\item $\varphi(x_1, \ldots, x_n)$ is a conjunction
of literals using only variables from $\{x_1, \ldots, x_n\}$ and of formulas
of the form $\xi_k(x_{i_1}, \ldots, x_{i_{k+1}})$ where $x_{i_1}, \ldots, x_{i_{k+1}} \in \{x_1, \ldots, x_n\}$
and $k < n$, or the negation of such a formula, 

\item for every atomic formula using only variables from $\{x_1, \ldots, x_n\}$ either itself or its negation
is a conjunct of $\varphi(x_1, \ldots, x_n)$, and

\item for every $k < n$ and all choices of $x_{i_1}, \ldots, x_{i_{k+1}} \in \{x_1, \ldots, x_n\}$,
either $\xi_k(x_{i_1}, \ldots, x_{i_{k+1}})$ or its negation is a conjunct of $\varphi(x_1, \ldots, x_n)$.
\end{itemize}

\noindent
(ii) A {\em closure type} in the variables $(x_1, \ldots, x_n)$
(with respect to $\mbC$)
is a formula of the following form that is satisfiable in some $\mcA \in \mbC$,
where $\pi(x_1, \ldots, x_n, y_1, \ldots, x_m)$ is a closure diagram:
\begin{align}\label{equation in def of cl tau-type}
\exists y_1, \ldots, y_m \Big( &\pi(x_1, \ldots, x_n, y_1, \ldots, y_m) \ \wedge \ 
\bigwedge_{i=1}^m \xi_n(x_1, \ldots, x_n, y_i)  \ \wedge \\
&\forall z \Big(\xi_n(x_1, \ldots, x_n, z) \rightarrow \Big[\bigvee_{i=1}^n z = x_i \ \vee \ 
\bigvee_{i=1}^m z = y_i \Big] \Big) \Big). \nonumber
\end{align}

\noindent
(iii) A formula of the form~(\ref{equation in def of cl tau-type})) allows the special case when the variables
`$y_1, \ldots, y_m$' and the parts `$\exists y_1, \ldots, y_m$' and `$\bigvee_{i=1}^m z = y_i$' are removed.
In this case we call the formula a {\em self-contained closure type}.
(Intuitively a self-contained closure type in the variables $x_1, \ldots, x_n$ expresses that 
the closure of $x_1, \ldots, x_n$ is itself.)
}\end{defin}

\noindent
Note that if the sequence of variables $x_1, \ldots, x_n$ in Definition~\ref{definition of closure type}
is {\em empty} then a closure type in this empty sequence of variables is a {\em sentence} $p$ which, if it
is satisfied in a sufficiently large $\mcA \in \mbC$, describes the 
isomorphism type and the ``closure structure'' of $\cl_\mcA(\es)$.

Note also that if~(\ref{equation in def of cl tau-type}) is a closure type (in the variables $(x_1, \ldots, x_n)$), 
then 
\[
\pi(x_1, \ldots, x_n, y_1, \ldots, y_m) \ \wedge \ 
\bigwedge_{i=1}^m \xi_n(x_1, \ldots, x_n, y_i)
\]
is a {\em self-contained} closure type in the variables $(x_1, \ldots, x_n, y_1, \ldots, y_m)$.
It follows that if $p(x_1, \ldots, x_n)$ is a closure type then there is a self-contained closure type
$p'(x_1, \ldots, x_n, y_1, \ldots, y_m)$ such that 
for every sufficiently large $\mcA \in \mbC$ the following hold:
\begin{enumerate}
\item[(a)] For all $a_1, \ldots, a_n \in A$, $\mcA \models p(a_1, \ldots, a_n)$ if and only if there are
$b_1, \ldots, b_m \in \cl_\mcA(a_1, \ldots, a_n)$ such that $\mcA \models p'(a_1, \ldots, a_n, b_1, \ldots, b_m)$.
\item[(b)] If $\mcA \models p'(a_1, \ldots, a_n, b_1, \ldots, b_m)$ then $b_1, \ldots, b_m \in \cl_\mcA(a_1, \ldots, a_n)$.
\end{enumerate}

\noindent
The following lemma is a straightforward consequence of the above definitions,
Assumption~\ref{assumptions on the closure}
and the definition (in Section~\ref{preliminaries}) of when two tuples have the same closure type.

\begin{lem}\label{tuples have the same closure type iff they satisfy the same formula}
For every $k \in \mbbN$, there are $s_k \in \mbbN$ and closure types
$\zeta_{k,i}(x_1, \ldots, x_k)$, for $i = 1, \ldots, s_k$, such that
\begin{enumerate}
\item for all sufficiently large $\mcA$ and all $a_1, \ldots, a_k \in A$, there is a unique $i$ such that
$\mcA \models \zeta_{k,i}(a_1, \ldots, a_k)$, and

\item for all sufficiently large $\mcA, \mcB \in \mbC$, all $a_1, \ldots, a_k \in A$, and all $b_1, \ldots, b_k \in B$,
$(\mcA, a_1, \ldots, a_k) \equiv_{\cl} (\mcB, b_1, \ldots, b_k)$ if and only if there is $i$ such that
$\mcA \models \zeta_{k,i}(a_1, \ldots, a_k)$ and $\mcB \models \zeta_{k,i}(b_1, \ldots, b_k)$.
\end{enumerate}
\end{lem}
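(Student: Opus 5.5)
The plan is to define the $\zeta_{k,i}$ as closure types built from canonical descriptions of closure structures, using the finiteness given by Assumption~\ref{assumptions on the closure}~(6) together with the bound $F_\mbC$ of condition~(3).

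Fix $k$. Take $\mcA$ large enough that $\cl_\mcA$ is defined on all sets of size at most $\max(k, F_\mbC(k))$. For $\bar a = (a_1, \ldots, a_k) \in A^k$ list $\cl_\mcA(\bar a) = \{a_1, \ldots, a_k, b_1, \ldots, b_m\}$ with $m \le F_\mbC(k)$, the $b_j$ distinct and not among the $a_i$. Let $\pi_{\bar a}(\bar x, \bar y)$ be the closure diagram of $(\bar a, \bar b)$ in $\mcA$. This means the full atomic diagram, together with, for every $l < k+m$ and every choice of $l+1$ variables, either $\xi_l$ or $\neg\xi_l$ according to whether it holds in $\mcA$ at the corresponding elements. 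Then form the closure type $\zeta_{\bar a}(\bar x)$ as in~(\ref{equation in def of cl tau-type}). It is satisfied by $\bar a$, because $\xi_k(\bar a, z)$ holds exactly when $z \in \cl_\mcA(\bar a)$ by condition~(4).

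The key step is to show that only finitely many such formulas arise, up to logical equivalence on large structures. The diagram $\pi$ uses finitely many variables ($\le k+F_\mbC(k)$). If $\tau$ is finite, the count is immediate. In general, I would use condition~(6): there are finitely many isomorphism types of $\mcA\uhrc\cl_\mcA(X)$ with $|X|\le k$. By condition~(5) each of these structures has universe $\cl_\mcA(X)$. The $\xi_l$-pattern on it is determined by the closure relations $c \in \cl_\mcA(C)$ for $C \subseteq \cl_\mcA(\bar a)$, and there are finitely many such patterns. So I would restrict $\pi$ to the finitely many relevant atomic formulas (those of a finite subsignature that separates the finitely many isomorphism types) and let $\zeta_{k,1}, \ldots, \zeta_{k,s_k}$ enumerate the resulting pairwise inequivalent closure types. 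I expect this to be the main obstacle: ensuring a finite formula still pins down the isomorphism type when $\tau$ is infinite. A finite formula cannot mention all of $\tau$. I would handle this by noting that finitely many structures can be separated pairwise by finitely many atomic formulas, and by building these separating literals into $\pi$.

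With the $\zeta_{k,i}$ in hand, the rest is routine. For part (1), existence follows because $\bar a$ satisfies $\zeta_{\bar a}$, which is equivalent to some $\zeta_{k,i}$. For uniqueness, suppose $\bar a$ satisfies two of them. The witnesses $\bar y$ must enumerate $\cl_\mcA(\bar a)\setminus\bar a$ by the $\forall z$ clause and condition~(4). The two diagrams then describe the same closure structure and isomorphism type up to re-enumeration of the $y$'s, so by the choice of representatives they coincide. For part (2), if $f$ witnesses $(\mcA,\bar a)\equiv_\cl(\mcB,\bar b)$, then $f$ carries the closure diagram of $(\bar a, \bar b')$ to that of $(\bar b, f(\bar b'))$. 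Here conditions~(4) and~(5) translate ``$c\in\cl(C)$'' into $\xi_{|C|}$ and substructures into atomic facts. Hence $\bar a$ and $\bar b$ satisfy the same $\zeta_{k,i}$. Conversely, if both satisfy $\zeta_{k,i}$, take witnesses $\bar y$ in each structure. The map $\bar a\bar c\mapsto\bar b\bar d$ preserves the diagram, so it is an isomorphism of the closed substructures that preserves the closure relation, again by~(4) and~(5).
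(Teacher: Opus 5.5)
Your argument is the natural unpacking of what the paper only asserts to be ``a straightforward consequence of the above definitions''; the paper gives no further proof. You form the canonical closure type of $\bar a$ from the diagram of an enumeration of $\cl_\mcA(\bar a)$, note that only finitely many such types arise, and read off (1) and (2) from the fact that the witnesses $\bar y$ are forced to enumerate $\cl_\mcA(\bar a)\setminus\rng(\bar a)$ without repetition. That skeleton is correct. You are also right that, for infinite $\tau$, Definition~\ref{definition of closure type} only produces a first-order formula after restricting to finitely many atomic formulas. With function symbols one likewise restricts to flat atomic formulas, which suffice by condition~(5).

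The step that needs tightening is the choice of the finite subsignature $\tau_0$. A $\tau_0$ that merely \emph{separates the isomorphism types} is not enough for the converse direction of (2). There you need the specific map $\bar a\bar c\mapsto\bar b\bar d$ to be a $\tau$-isomorphism, and a $\tau_0$-isomorphism need not be one.

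For example, let $\tau=\{E,P_0,P_1,P_2,\ldots\}$. Let the members of $\mbC$ be disjoint unions of $E$-edges $\{u,v\}$ with $u\in P_0$, $v\in P_1$ and all other $P_i$ empty. Let $\cl(X)$ be the union of the edges meeting $X$, so $\xi_k$ can be taken to be $\bigvee_{i\le k}(x_{k+1}=x_i\vee E(x_i,x_{k+1}))$. Closed sets are determined up to isomorphism by their size, so $\tau_0=\emptyset$ separates the isomorphism types. Yet $u$ and $v$ then get the same restricted closure type, although $(\mcA,u)\not\equiv_{\cl}(\mcA,v)$.

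The fix is to take representatives $\mathcal{D}_1,\ldots,\mathcal{D}_r$ of the finitely many isomorphism types from condition~(6). For every pair $i,j$ and every bijection $g:D_i\to D_j$ that is not a $\tau$-isomorphism, put into $\tau_0$ one symbol witnessing this. Then every bijection between structures isomorphic to some $\mathcal{D}_i$ that preserves $\tau_0$ preserves all of $\tau$, and (2) goes through.

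Use all flat $\tau_0$-atomic formulas in the variables $\bar x\bar y$, rather than a hand-picked list of separating literals. This also makes the restricted diagram invariant under renaming the $y$'s, which your uniqueness argument in (1) relies on.
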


\begin{defin}\label{definition of bounded closure type}{\rm
Let $p(\bar{x}, y)$ be a closure type where $\bar{x} = (x_1, \ldots, x_n)$.
Then $p(\bar{x}, y)$ is {\em $y$-bounded} if 
$p(\bar{x}, y) \models \xi_n(\bar{x}, y)$, or equivalently, if
for all sufficiently large $\mcA \in \mbC$, if $\mcA \models p(\bar{a}, b)$ then 
$b \in \cl_\mcA(\bar{a})$.
Otherwise we say that $p(\bar{x}, y)$ is {\em $y$-unbounded}.
}\end{defin}

\begin{defin}\label{definition of restriction of a closure type}{\rm
Let $p(\bar{x})$ be a closure type and let $\bar{y}$ be a subsequence of $\bar{x}$.
A closure type $q(\bar{y})$ will be called a {\em restriction of $p(\bar{x})$ to $\bar{y}$} if 
$p(\bar{x}) \models q(\bar{y})$.
It follows from the definition of closure type that if $q'(\bar{y})$ is another restriction of $p(\bar{x})$ to $\bar{y}$,
then $q(\bar{y})$ and $q'(\bar{y})$ are logically equivalent.
Therefore we will talk about {\em the} restriction of $p(\bar{x})$ to $\bar{y}$.
The restriction of $p(\bar{x})$ to $\bar{y}$ will be denoted by $p \uhrc \bar{y}$.
}\end{defin}

\noindent
{\bf \em For the rest of this section we make the following assumption:}

\begin{assump}\label{assumption that nonclosed types have realizations}
{\bf (Extension property)} {\rm
Suppose that $p(\bar{x}, y)$ is a closure type which is $y$-unbounded.
Let $q(\bar{x}) = p \uhrc \bar{x}$.
Then for all sufficiently large $\mcA \in \mbC$, 
$\mcA \models \forall \bar{x} \exists y \big(q(\bar{x}) \to p(\bar{x}, y)\big)$.
}\end{assump}

We consider some examples with respect to Assumption~\ref{assumption that nonclosed types have realizations}.

\begin{exam}\label{continuation of example 1} {\bf (DAGs)} {\rm
Let $\mbC$ and $\cl_\mcG$ for all $\mcG \in \mbC$ be defined as in
Example~\ref{example 1 of first assumption}, so 
Assumption~\ref{assumptions on the closure}
is satisfied.
In addition suppose that there is $\gamma \in \mbbN$ such that all $\mcG \in \mbC$ satisfy the following:
\begin{enumerate}
\item All leaves (vertices without a child) belong to layer $t$. 
(Vertices without parents, called roots, belong to layer 0. If a vertex $v$ 
has a parent on layer $k$ then $v$ belongs to layer $k+1$.)
\item Every vertex except for the roots has exactly $s$ parents all of which belong to the previous layer.
\item For every $k \in \mbbN$, if $|G|$ is sufficiently large 
then every choice of distinct $v_1, \ldots, v_s \in G$ from the same layer have at least $k$ common children.
\end{enumerate}
Then Assumption~\ref{assumption that nonclosed types have realizations}
is satisfied. 

A variation of this example is to replace~(3) with:
\begin{enumerate}
\item[(3')] For some fixed positive $r \in \mbbN$, 
every sequence of distinct $v_1, \ldots, v_s \in G$ from the same layer has exactly $r$ common children,
and for all $k \in \mbbN$, if $|G|$ is large enough then $\mcG$ has at least $k$ roots.
\end{enumerate}
Then we also modify $\cl_\mcG$ so that 
$\cl_\mcG(X) = X \cup A \cup B$ where $A$ is the set of ancestors of vertices in $X$, and $B$ contains all $v$
such all ancestors of $v$ belong to $A \cup X$.
Again one can verify that 
Assumption~\ref{assumption that nonclosed types have realizations}
is satisfied.
}\end{exam}

\begin{exam}\label{example of pregeometries 2} {\bf (Partially homogeneous pregeometries)} {\rm
Let $\mbC$ be as in Example~\ref{example of pregeometries 1},
so $\mbC$ is a class of finite pregeometries $\mcA = (A, \cl_\mcA)$ represented as $\tau$-structures where 
$\tau = \{P_0, P_1, P_2, \ldots, \}$ and $a_{k+1} \in \cl_\mcA(a_1, \ldots, a_k)$ if and only if
$\mcA \models P_k(a_1, \ldots, a_{k+1})$.
Suppose, in addition, that for all $k \in \mbbN$, if $\mcA \in \mbC$ is sufficiently large (relative to $k$),
$a_1, \ldots, a_k \in A$ is an independent sequence 
(i.e. $a_i \notin \cl_\mcA(\{a_1, \ldots, a_k\} \setminus \{a_i\})$ for all $i = 1, \ldots, k$) 
and $b_1, \ldots, b_k \in A$ is an
independent sequence, then $|\cl_\mcA(a_1, \ldots, a_k)| = |\cl_\mcA(b_1, \ldots, b_k)|$.
Since members of $\mbC$ can only express if an element is in the closure of a tuple of elements one can now
prove that 
Assumption~\ref{assumption that nonclosed types have realizations}
is satisfied.
}\end{exam}

\begin{exam}\label{expansions of the examples} {\bf (Expansions)} {\rm
Suppose that assumptions~\ref{assumptions on the closure}
and~\ref{assumption that nonclosed types have realizations}
are satisfied for $\mbC$.
Suppose that $\sigma \supset \tau$ and that $\mbC_\sigma$ is a 
class of $\sigma$-structures such that $\mbC = \{\mcA \uhrc \tau : \mcA \in \mbC_\sigma\}$. 
Then Assumption~\ref{assumptions on the closure} is satisfied for $\mbC_\sigma$.
But whether Assumption~\ref{assumption that nonclosed types have realizations}
is satisfied for $\mbC_\sigma$ depends on how $\mbC_\sigma$ is chosen.
One situation that can be of interest is when the universe of every 
$\mcA \in \mbC$ is $\{1, \ldots, n\}$ for some positive $n \in \mbbN$ and when a probability distribution $\mbbP_n$
is given on the set of $\mcB \in \mbC_\sigma$ with cardinality $n$.
In general we cannot expect that 
Assumption~\ref{assumption that nonclosed types have realizations}
is satisfied for $\mbC_\sigma$ in this situation.
But it may be the case that, for every $k \in \mbbN$, if $|\bar{x}| \leq k$ then, as $n \to \infty$,
the probability that a random $\mcB \in \mbC_\sigma$ with cardinality $n$ satisifies
$\forall \bar{x} \exists y \big(q(\bar{x}) \to p(\bar{x}, y)\big)$ tends to a positive number
(where $q(\bar{x})$ and $p(\bar{x}, y)$ are as in
Assumption~\ref{assumption that nonclosed types have realizations}).
In this case we can, for each $k \in \mbbN$ and for all large enough $n$, keep only those $\mcB \in \mbC_\sigma$ with cardinality $n$ that satisfy all 
$\forall \bar{x} \exists y \big(q(\bar{x}) \to p(\bar{x}, y)\big)$
with $|\bar{x}| \leq k$.
Then the resulting subclass of $\mbC_\sigma$ satisfies 
Assumption~\ref{assumption that nonclosed types have realizations}.
An example of the described situation appears in \cite{KT} where $\mbC$
is a class of trees of bounded height which is one possibility in Example~\ref{continuation of example 1}.
}\end{exam}

\begin{theor}\label{asymptotic quantifier elimination for tau-structures}
For every first-order formula $\varphi(\bar{x})$ that is satisfied in arbitrarily large members of $\mbC$ there 
are closure types $p_1(\bar{x}), \ldots, p_n(\bar{x})$
such that in all sufficiently large $\mcA \in \mbC$ we have
\[
\mcA \models \forall \bar{x} \bigg(\varphi(\bar{x}) \leftrightarrow \bigvee_{i=1}^n p_i(\bar{x})\bigg).
\]
\end{theor}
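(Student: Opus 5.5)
Our approach is induction on the complexity of $\varphi(\bar{x})$, proving a slightly stronger statement that avoids the hypothesis ``satisfied in arbitrarily large members''. For every formula $\varphi(\bar{x})$ there is a (possibly empty) finite set of closure types $p_1(\bar{x}), \ldots, p_n(\bar{x})$ such that, for all sufficiently large $\mcA \in \mbC$, $\varphi(\bar{x})$ is equivalent in $\mcA$ to $\bigvee_i p_i(\bar{x})$. The empty disjunction is read as $\bot$. This handles the theorem: if $\varphi$ is satisfied in arbitrarily large members, the set is nonempty, and when it is empty $\varphi$ fails in all sufficiently large $\mcA$.

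The induction needs two facts about closure types. By Lemma~\ref{tuples have the same closure type iff they satisfy the same formula}, for each $k$ the closure types $\zeta_{k,1}, \ldots, \zeta_{k,s_k}$ partition all $k$-tuples in sufficiently large $\mcA$. Moreover, every closure type in $\bar{x}$ is, in large $\mcA$, equivalent to one of the $\zeta_{k,i}$ or unsatisfied there. Indeed, a closure type pins down the isomorphism type of $\mcA \uhrc \cl_\mcA(\bar{x})$ together with its $\xi$-structure, hence the $\equiv_{\cl}$-class. I will work with these finitely many $\zeta_{k,i}$ throughout.

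For atomic $\varphi(\bar{x})$, two tuples with the same closure type agree on atomic formulas, because $\equiv_{\cl}$ gives an isomorphism of $\mcA \uhrc \cl_\mcA(\bar{a})$ sending $\bar{a}$ to $\bar{b}$ (condition~(5) of Assumption~\ref{assumptions on the closure} ensures that terms stay inside the closure). So $\varphi$ is equivalent to the disjunction of those $\zeta_{k,i}$ that imply it. Negation uses the complementary set of $\zeta_{k,i}$, and conjunction uses intersection; both rely on the partition property. If a new variable must be added to align tuples, I use restriction (Definition~\ref{definition of restriction of a closure type}).

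The main step is $\exists y\, \varphi(\bar{x}, y)$. By induction, $\varphi(\bar{x}, y)$ is equivalent to $\bigvee_{j \in J} p_j(\bar{x}, y)$ with $p_j$ closure types. Then $\exists y\,\varphi$ is equivalent to $\bigvee_{j} \exists y\, p_j(\bar{x}, y)$, and it suffices to show that each $\exists y\, p_j(\bar{x},y)$ is equivalent in large $\mcA$ to $q_j(\bar{x}) := p_j \uhrc \bar{x}$. The direction $\exists y\, p_j \to q_j$ holds by the definition of restriction. For the converse I split into cases.
\begin{itemize}
\item If $p_j$ is $y$-unbounded, Assumption~\ref{assumption that nonclosed types have realizations} gives exactly $\forall \bar{x}(q_j \to \exists y\, p_j)$ in all sufficiently large $\mcA$.
\item If $p_j$ is $y$-bounded, suppose $\mcA \models q_j(\bar{a})$. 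Then $\cl_\mcA(\bar{a})$ has the isomorphism and closure structure described by $q_j$. Since $p_j \models q_j$ and $p_j$ forces $y \in \cl(\bar{x})$, the witness $y$ corresponds to a specific element of the closure described by $q_j$ (one of the $x_i$ or one of the $y_i$ of the defining formula), so a witness exists in $\cl_\mcA(\bar{a})$.
\end{itemize}

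I expect the bounded case to be the main obstacle. It has to be checked that $p_j(\bar{a}, b)$ depends only on data inside $\cl_\mcA(\bar{a})$, i.e.\ that $\cl_\mcA(\bar{a}, b) = \cl_\mcA(\bar{a})$ when $b \in \cl_\mcA(\bar{a})$. This uses monotonicity and idempotence from Assumption~\ref{assumptions on the closure}(2); idempotence is available because the closure has size at most $F_\mbC(k)$, provided $k$ is taken large enough. The check also uses the uniqueness of the restriction, so that any $\bar{a}$ satisfying $q_j$ has an isomorphic closure in which the required $b$ can be found.

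Finally, because only finitely many formulas occur in the induction, taking the maximum of the finitely many ``sufficiently large'' thresholds gives a uniform bound, which completes the proof.
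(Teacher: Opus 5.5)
Your proposal is correct and follows essentially the same route as the paper: induction on formulas, Boolean steps via the finite partition into closure types, and an existential step reduced to showing that $\exists y\, p(\bar{x}, y)$ is equivalent to $p \uhrc \bar{x}$. As in the paper, the $y$-unbounded case is exactly Assumption~\ref{assumption that nonclosed types have realizations}, and the $y$-bounded case finds the witness inside $\cl_\mcA(\bar{a})$; the paper does this through a self-contained closure type $p'(\bar{x}, y, \bar{z})$, while you use $\cl_\mcA(\bar{a}b) = \cl_\mcA(\bar{a})$ and transport along the isomorphism of closures, which is the same idea. Your strengthening to allow the empty disjunction is a genuine tightening the paper's induction tacitly needs, since ``satisfied in arbitrarily large members'' is not preserved by negation or by passing to a disjunct; you should also discard any $p_j$ not realized in sufficiently large members before taking restrictions.
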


\noindent
{\bf Proof.}
The proof uses induction on the complexity of the first-order formula $\varphi(\bar{x})$.
Without loss of generality we can assume that every connective in $\varphi(\bar{x})$ (if any) is $\neg$ or $\vee$ and that 
every quantifier in $\varphi(\bar{x})$ (if any) is $\exists$.

First suppose that $\varphi(\bar{x})$ is an atomic formula.
Then let $p_1(\bar{x}), \ldots, p_k(\bar{x})$ be an enumeration of all (up to logical equivalence) 
closure types in the variables $\bar{x}$ (there are only finitely many) such that $p_i(\bar{x}) \models \varphi(\bar{x})$.
Then $\varphi(\bar{x})$ is equivalent to $\bigvee_{i = 1}^k p_i(\bar{x})$ in all sufficiently large $\mcA \in \mbC$.

Now suppose that $\varphi(\bar{x})$ has the form $\neg \psi(\bar{x})$.
By the induction hypothesis, $\psi(\bar{x})$ is equivalent, in all sufficiently large $\mcA \in \mbC$,
to $\bigvee_{i=1}^n p_i(\bar{x})$ where each $p_i(\bar{x})$ is a closure type.
Let $p'_1(\bar{x}), \ldots, p'_m(\bar{x})$ enumerate all (up to logical equivalence) closure types
that are not equivalent to any of $p_1(\bar{x}), \ldots, p_n(\bar{x})$.
Then $\varphi(\bar{x})$ is, in all sufficiently large $\mcA \in \mbC$, 
equivalent to $\neg \bigvee_{i=1}^n p_i(\bar{x})$ which in turn is (in all sufficiently large $\mcA \in \mbC$) 
equivalent to $\bigvee_{i=1}^m p'_i(\bar{x})$.

Next suppose that $\varphi(\bar{x})$ has the form $\varphi_1(\bar{x}) \vee \varphi_2(\bar{x})$.
By the induction hypothesis, there are closure types $p_1(\bar{x}), \ldots, p_n(\bar{x})$
and $p'_1(\bar{x}), \ldots, p'_m(\bar{x})$ such that $\varphi_1(\bar{x})$ is equivalent, in all sufficently large $\mcA \in \mbC$,
to $\bigvee_{i=1}^n p_i(\bar{x})$ and $\varphi_2(\bar{x})$ is equivalent, in all sufficiently large $\mcA \in mbC$,
to $\bigvee_{i=1}^m p'_i(\bar{x})$. 
Then $\varphi(\bar{x})$ is equivalent, in all sufficiently large $\mcA \in \mbC$, to
$\bigvee_{i=1}^n p_i(\bar{x}) \vee \bigvee_{i=1}^m p'_i(\bar{x})$ which is a disjunction of closure types.

Finally suppose that $\varphi(\bar{x})$ has the form $\exists y\psi(\bar{x}, y)$ where $\bar{x} = (x_1, \ldots, x_k)$.
By the induction hypothesis, $\psi(\bar{x}, y)$ is equivalent, in all sufficiently large $\mcA \in \mbC$,
to $\bigvee_{i = 1}^n p_i(\bar{x}, y)$ where all $p_i(\bar{x}, y)$ are closure types.
Then $\exists y \psi(\bar{x}, y)$ is equivalent, in all sufficiently large $\mcA \in \mbC$,
to $\bigvee_{i = 1}^n \exists y p_i(\bar{x}, y)$. 

Hence it suffices to show that for every closure type $p(\bar{x}, y)$,
$\exists y p(\bar{x}, y)$ is equivalent, in all sufficiently large $\mcA \in \mbC$, to $p \uhrc \bar{x}$.
Let $q(\bar{x}) = p \uhrc \bar{x}$ be the restriction of $p(\bar{x}, y)$ to $\bar{x}$, 
so $q(\bar{x})$ is a closure type.
We divide the argument into two cases. Either $p(\bar{x}, y)$ is $y$-bounded or it is $y$-unbounded.

First suppose that $p(\bar{x}, y)$ is $y$-bounded.
Since $p(\bar{x}, y) \models q(\bar{x})$ it follows that if $\mcA \not\models q(\bar{a})$ 
then $\mcA \not\models p(\bar{a}, b)$ for all $b \in A$ and hence $\mcA \not\models \exists y p(\bar{a}, y)$.
Now suppose that $\mcA \in \mbC$ and $\mcA \models q(\bar{a})$.
By the argument after Definition~\ref{definition of closure type} 
there is a self-contained closure type $p'(\bar{x}, y, \bar{z})$ such that 
$\forall \bar{x}, y \big(p(\bar{x}, y) \leftrightarrow \exists \bar{z} p'(\bar{x}, y, \bar{z})\big)$
holds in all sufficiently large $\mcA \in \mbC$.
Since $p(\bar{x}, y)$ is $y$-bounded we have $p(\bar{x}, y) \models \xi_k(\bar{x}, y)$ and it follows that
$\forall \bar{x} \big(q(\bar{x}) \leftrightarrow \exists y, \bar{z}  \ p'(\bar{x}, y, \bar{z})\big)$
holds in all sufficiently large $\mcA \in \mbC$.
So if $\mcA \in \mbC$ is sufficiently large and $\mcA \models q(\bar{a})$ then there are
$b, \bar{c}$ such that $\mcA \models p'(\bar{a}, b, \bar{c})$ and hence $\mcA \models p(\bar{a}, b)$.
As $q(\bar{x}) = p \uhrc \bar{x}$ it immeditately 
follows that if $\mcA \models p(\bar{a}, b)$ then $\mcA \models q(\bar{a})$.

Now suppose that $p(\bar{x}, y)$ is $y$-unbounded.
As $p(\bar{x}, y) \models q(\bar{x})$ it follows that, for all $\mcA \in \mbC$, if $\mcA \models p(\bar{a}, b)$
then $\mcA \models q(\bar{a})$.
If $\mcA \in \mbC$ and $\mcA \models q(\bar{a})$
it follows from Assumption~\ref{assumption that nonclosed types have realizations}
that if $\mcA$ is sufficiently large then $\mcA \models \exists y p(\bar{a}, y)$.
So in all sufficiently large $\mcA \in \mbC$, $\exists y p(\bar{x}, y)$ is equivalent to $q(\bar{x})$.
\hfill $\square$

\begin{cor}\label{0-1 law} {\bf (Zero-one law)} 
Let $()$ denote the empty sequence.
Suppose that for all sufficiently large $\mcA, \mcB \in \mbC$ we have $(\mcA, ()) \equiv_{\cl} (\mcB, ())$,
or equivalently, suppose that there is a closure type $p$ in the variables $()$, i.e. without
free variables, such that for all sufficiently large $\mcA \in \mbC$, $\mcA \models p$.
Then for every first-order sentence $\varphi$, either $\varphi$ is true in all sufficently large $\mcA \in \mbC$ or
$\varphi$ is false in all sufficiently large $\mcA \in \mbC$.
\end{cor}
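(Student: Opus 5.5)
The corollary will follow by applying Theorem~\ref{asymptotic quantifier elimination for tau-structures} to sentences, that is, to formulas in the empty sequence of variables $()$. I first check that the two forms of the hypothesis agree. By Lemma~\ref{tuples have the same closure type iff they satisfy the same formula} with $k=0$, there are finitely many closure types $\zeta_{0,1}, \ldots, \zeta_{0,s_0}$ in the variables $()$. Every sufficiently large $\mcA \in \mbC$ satisfies exactly one of them. Moreover, two sufficiently large $\mcA, \mcB$ satisfy $(\mcA,()) \equiv_{\cl} (\mcB,())$ if and only if they satisfy the same $\zeta_{0,i}$. So if all sufficiently large members of $\mbC$ have the same closure type for $()$, fix one sufficiently large $\mcA_0$ and let $p$ be the unique $\zeta_{0,i}$ it satisfies; then every sufficiently large $\mcA$ satisfies $p$. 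The converse direction is equally immediate.

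Now fix a sentence $\varphi$ and split into two cases.

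\emph{Case 1: $\varphi$ is satisfied in only boundedly many members of $\mbC$.} Then $\varphi$ is false in all sufficiently large $\mcA \in \mbC$, and we are done.

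\emph{Case 2: $\varphi$ is satisfied in arbitrarily large members of $\mbC$.} Theorem~\ref{asymptotic quantifier elimination for tau-structures} gives closure types $p_1, \ldots, p_n$, all without free variables, such that $\varphi \leftrightarrow \bigvee_{i=1}^n p_i$ holds in all sufficiently large $\mcA \in \mbC$. Each $p_i$ is, in sufficiently large $\mcA$, equivalent to one of the $\zeta_{0,j}$. Since only $p$ is realized in sufficiently large structures, each $p_i$ is either equivalent to $p$ or false in all sufficiently large $\mcA$. Since $\varphi$ holds in arbitrarily large members, some $p_i$ must be true in arbitrarily large $\mcA$, so that $p_i$ is equivalent to $p$. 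Hence $\varphi$ holds in every sufficiently large $\mcA \in \mbC$.

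\emph{Where care is needed.} The only delicate point is the bookkeeping of the finitely many ``sufficiently large'' thresholds. These come from the theorem, from Lemma~\ref{tuples have the same closure type iff they satisfy the same formula}, and from the hypothesis. Since there are finitely many of them, I take their maximum.

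I also need that any two closure types in $()$ satisfied by the same large structure are equivalent there. This is the uniqueness clause (1) of Lemma~\ref{tuples have the same closure type iff they satisfy the same formula}, applied after identifying each $p_i$ with the $\zeta_{0,j}$ it agrees with in large structures. Apart from this, the argument is direct.
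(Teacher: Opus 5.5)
Your proof is correct and follows the same route as the paper. You use Theorem~\ref{asymptotic quantifier elimination for tau-structures} to reduce $\varphi$ to a disjunction of closure types in the variables $()$, and then observe that in sufficiently large members of $\mbC$ this disjunction is true or false according to whether some disjunct is equivalent to $p$. Your case split on whether $\varphi$ holds in arbitrarily large members adds welcome care, since the paper applies the theorem without checking that hypothesis.
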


\noindent
{\bf Proof.}
Recall that if a closure type in the variables $()$ is satisfied in sufficiently large 
$\mcA \in \mbC$ and $\mcB \in \mbC$, then there is an isomorphism from 
$\mcA \uhrc \cl_\mcA(\es)$ to $\mcB \uhrc \cl_\mcB(\es)$.
We adopt the assumptions of the corollary and let $\varphi$ be a first-order sentence.
By Theorem~\ref{asymptotic quantifier elimination for tau-structures}, $\varphi$ is equivalent,
in all sufficiently large $\mcA \in \mbC$, to $\bigvee_{i=1}^n p_i$ where all $p_i$ are closure types
in the variables $()$. 
From the assumption that for all sufficiently large $\mcA, \mcB \in \mbC$ we have $(\mcA, ()) \equiv_{\cl} (\mcB, ())$
it follows that there is a closure type $p$ (without free variables) such that
all sufficiently large $\mcA \in \mbC$ satisfy $p$.

If some $p_i$ is logically equivalent to $p$ then $\bigvee_{i=1}^n p_i$, and hence $\varphi$,
is true in all sufficiently large $\mcA \in \mbC$.
If no $p_i$ is logically equivalent to $p$ then $\bigvee_{i=1}^n p_i$, and hence $\varphi$,
is false in all sufficiently large $\mcA \in \mbC$.
\hfill $\square$

\begin{rem}\label{remark on expansions} {\bf (On random expansions)}
{\rm
Suppose that $\mbC = \bigcup_{n \in \mbbN} \mbC_n$ where $\mbC_n$ is a set of $\tau$-structures
with universe $\{1, \ldots, m_n\}$ for some $m_n$ and $\lim_{n\to\infty} m_n = \infty$.
Suppose that $\sigma \supset \tau$ is a vocabulary
and that $\mbC_n^\sigma$ is the set of all expansions to $\sigma$ of structures in $\mbC_n$.
Furthermore, let $\mbbP_n$ be a probability distribution on $\mbC_n^\sigma$.
If we aim at proving a convergence law for (say) all first-order sentences then, in particular,
$\lim_{n\to\infty} \mbbP_n(\varphi)$ must exist for every $\tau$-sentence $\varphi$.
Now suppose that each $\mbC_n$ is a singleton, containing a ``base structure'' which we randomly expand to $\sigma$
(and $\mbC_n^\sigma$ contains all such expansions).
If $\lim_{n\to\infty} \mbbP_n(\varphi)$ converges for a $\tau$-sentence $\varphi$, 
then the limit must be 0 or 1 (as each $\mbC_n$ is a singleton),
which means that $\varphi$ must either be true in all sufficiently large $\mcA \in \mbC$ or false in all
sufficiently large $\mcA \in \mbC$.
Thus assumptions \ref{assumptions on the closure}, \ref{assumption that nonclosed types have realizations},
and the additional assumption in Corollary~\ref{0-1 law}
give sufficient conditions for convergence to 0 or 1, with respect to $\mbbP_n$, for every $\tau$-sentence 
(provided that each $\mbC_n$ is a singleton).
For convergence laws for random expansions of various kinds of base structures see for example
\cite{ADGP, Bal03, DGH, KopDeg, KK, KT, Lyn}.
}\end{rem}

\subsection{The limit theory}

{\bf \em For the rest of the section let us add the following assumption.}

\begin{assump}\label{assumption about a unique 0-closure}{\rm
Suppose that for all sufficiently large $\mcA, \mcB \in \mbC$ we have $(\mcA, ()) \equiv_{\cl} (\mcB, ())$,
or equivalently, suppose that there is a closure type $p$ without
free variables such that all sufficiently large $\mcA \in \mbC$ satisfy $p$.
}\end{assump}

\noindent
By Corollary~\ref{0-1 law}, for every first-order $\tau$-sentence $\varphi$, either $\varphi$ is true in all 
sufficiently large $\mcA \in \mbC$ or $\varphi$ is false in all sufficiently large $\mcA \in \mbC$.
{\bf \em Let $T_\mbC$ be the set of all sentences $\varphi$ such that $\varphi$ is true in every sufficienlty large $\mcA \in \mbC$.}
It immediately follows that if $\varphi_1, \ldots, \varphi_n \in T_\mbC$, 
then $\varphi_1 \wedge \ldots \wedge \varphi_n \in T_\mbC$, so (by compactness) $T_\mbC$ is consistent.
From the choice of $T_\mbC$ it is evident that for every sentence $\varphi$,
either $\varphi$ or $\neg\varphi$ belongs to $T_\mbC$.
It is also clear that if $T_\mbC \models \varphi$ then $\varphi \in T_\mbC$.

\begin{theor}\label{T is omega-categorical}
$T_\mbC$ is $\omega$-categorical and has the FMP.
Moreover, for every formula $\varphi(\bar{x})$ there are closure types $p_1(\bar{x}), \ldots, p_m(\bar{x})$
such that $T_\mbC \models \forall \bar{x} \Big(\varphi(\bar{x}) \leftrightarrow \bigvee_{i=1}^m p_i(\bar{x})\Big)$.
\end{theor}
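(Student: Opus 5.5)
We prove the ``moreover'' part first, then $\omega$-categoricity, and then the FMP.

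\emph{Quantifier elimination down to closure types.} Let $\varphi(\bar{x})$ be a formula. If $\exists \bar{x}\,\varphi(\bar{x})$ is false in all sufficiently large $\mcA \in \mbC$, then $\neg\exists\bar{x}\,\varphi(\bar{x}) \in T_\mbC$, and $\varphi$ is equivalent modulo $T_\mbC$ to the empty disjunction ($m = 0$). Otherwise, by Corollary~\ref{0-1 law}, $\varphi$ is satisfied in arbitrarily large members of $\mbC$. Theorem~\ref{asymptotic quantifier elimination for tau-structures} then gives closure types $p_1, \ldots, p_m$ such that the sentence $\forall\bar{x}\big(\varphi(\bar{x}) \leftrightarrow \bigvee_i p_i(\bar{x})\big)$ holds in all sufficiently large $\mcA \in \mbC$. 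Hence this sentence belongs to $T_\mbC$, which is the claimed equivalence.

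\emph{$\omega$-categoricity.} By the previous step, every formula in $n$ free variables is equivalent modulo $T_\mbC$ to a disjunction of closure types in those variables. By Lemma~\ref{tuples have the same closure type iff they satisfy the same formula} there are only finitely many closure types in $n$ variables, namely $\zeta_{n,1}, \ldots, \zeta_{n,s_n}$ up to equivalence in large members of $\mbC$. So there are only finitely many formulas in $n$ variables up to $T_\mbC$-equivalence. Equivalently, $T_\mbC$ has finitely many $n$-types for each $n$. Since $T_\mbC$ is complete and consistent (as noted before the theorem), the Ryll-Nardzewski theorem gives that $T_\mbC$ is $\omega$-categorical. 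One detail needs checking here. The equivalence $\zeta_{n,i} \leftrightarrow \zeta'$ for logically equivalent closure types, and the exhaustiveness statement $\forall\bar{x}\bigvee_i \zeta_{n,i}(\bar{x})$, must both be first-order sentences that hold in all large $\mcA$, so that they lie in $T_\mbC$. This is immediate from part~(1) of that lemma. In addition, each closure type is a single first-order formula: $\xi_n$ is a formula by Assumption~\ref{assumptions on the closure}(4), and the number $m$ of witnesses is bounded by $F_\mbC(n)$.

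\emph{FMP.} Suppose $T_\mbC \models \varphi$. Then $\varphi \in T_\mbC$, so $\varphi$ holds in every sufficiently large $\mcA \in \mbC$. Since $\mbC$ has members of unbounded finite size (Assumption~\ref{assumptions on the closure}(1)), there is a finite model of $\varphi$.

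The only step that is not immediate is the reduction, in the first paragraph, of an arbitrary formula to the case covered by Theorem~\ref{asymptotic quantifier elimination for tau-structures}, which requires its hypothesis that $\varphi$ be satisfied in arbitrarily large members of $\mbC$. That reduction is handled by the zero-one dichotomy of Corollary~\ref{0-1 law}.
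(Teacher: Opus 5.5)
Your proposal is correct and follows essentially the same route as the paper. The FMP is read off directly from the definition of $T_\mbC$. For $\omega$-categoricity, Theorem~\ref{asymptotic quantifier elimination for tau-structures} and Corollary~\ref{0-1 law} make every formula equivalent modulo $T_\mbC$ to a disjunction of closure types, of which there are finitely many. Ryll-Nardzewski then finishes the argument.

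Your separate handling of formulas not satisfied in arbitrarily large members of $\mbC$, via the empty disjunction, spells out a hypothesis of Theorem~\ref{asymptotic quantifier elimination for tau-structures} that the paper's proof passes over without comment.
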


\noindent
{\bf Proof.}
It is immediate from the definition of $T_\mbC$ that it has the finite model property.
To prove that $T_\mbC$ is $\omega$-categorical it suffices to prove that, for every $n \in \mbbN^+$,
there are, up to equivalence in models of $T_\mbC$, only finitely many formulas in the free variables 
$\bar{x} = (x_1, \ldots, x_n)$.
Up to logical equivalence there are only finitely many closure types in the variables $\bar{x}$.
Hence it suffices to show that for every $\varphi(\bar{x})$ there are closure types $p_1(\bar{x}), \ldots, p_m(\bar{x})$
such that $T_\mbC \models \forall \bar{x} \Big(\varphi(\bar{x}) \leftrightarrow \bigvee_{i=1}^m p_i(\bar{x})\Big)$.
But this follows immediately from
Theorem~\ref{asymptotic quantifier elimination for tau-structures}, Corollary~\ref{0-1 law}
and the definition of $T_\mbC$.
\hfill $\square$

\begin{prop}\label{cl implies acl}
Let $\mcM \models T_\mbC$.
Then there is a closure operator $\cl_\mcM : \mcP(M) \to \mcP(M)$ such that, for all $k \in \mbbN$
and all $a_1, \ldots, a_{k+1} \in M$, $a_{k+1} \in \cl_\mcM(a_1, \ldots, a_k)$ if and only if
$\mcM \models \xi_k(a_1, \ldots, a_k, a_{k+1})$.
Moreover, for all $A \subseteq M$, $\cl_\mcM(A) \subseteq \acl_\mcM(A)$.
\end{prop}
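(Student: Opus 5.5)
Define $\cl_\mcM$ on finite sets first and then extend it to all sets by finite character. For a finite $X \subseteq M$, enumerate it as $a_1, \ldots, a_k$ and put
\[
\cl_\mcM(X) = \{ b \in M : \mcM \models \xi_k(a_1, \ldots, a_k, b) \}.
\]
This does not depend on the enumeration or on repetitions. For $\mcA \in \mbC$ with $|A| \geq n_{k}$, the sentence expressing ``$\xi_k(\bar{x}, z)$ is invariant under permutations of $\bar{x}$, and $\xi_k(\bar{x},z) \leftrightarrow \xi_{k'}(\bar{x}',z)$ whenever $\rng(\bar{x}') = \rng(\bar{x})$'' holds in all sufficiently large $\mcA$, because there $\xi_k$ defines $\cl_\mcA$ of a set. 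Hence this sentence belongs to $T_\mbC$. For arbitrary $A \subseteq M$ let $\cl_\mcM(A) = \bigcup\{\cl_\mcM(X) : X \subseteq A \text{ finite}\}$. By monotonicity, proved below, this agrees with the finite definition when $A$ is finite. The ``if and only if'' clause of the proposition then holds by construction.

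The closure axioms are transferred from $\mbC$ in the same way. Fix $k$. Extensivity, $\xi_k(\bar{x}, x_i)$, and monotonicity, $\xi_k(\bar{x}, z) \to \xi_{k+l}(\bar{x}\bar{y}, z)$, are first-order sentences true in all $\mcA \in \mbC$ with $|A| \geq n_{k+l}$ by Assumption~\ref{assumptions on the closure}~(2)(a),(b), so they lie in $T_\mbC$. Idempotence needs more care, because in $\mbC$ it is only guaranteed when $\cl_\mcA(X) \in \mcP_k(A)$. Let $m = F_\mbC(k)$. The following sentence holds in all $\mcA$ with $|A| \geq n_m$: ``for all $\bar{x}$ of length $k$ there are $y_1, \ldots, y_m$ such that $\forall z(\xi_k(\bar{x},z) \leftrightarrow \bigvee_i z = y_i)$, and for all $w$, $\xi_m(\bar{y}, w) \to \xi_k(\bar{x}, w)$.'' Indeed $\cl_\mcA(X)$ has at most $m$ elements by~(3), we may list it with repetitions, and idempotence holds for sets in $\mcP_m(A)$ by~(2)(c) applied with $k$ replaced by $m$. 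So this sentence is in $T_\mbC$. It gives idempotence for finite sets in $\mcM$. For infinite $A$, if $b \in \cl(\cl(A))$ then $b \in \cl(Y)$ for a finite $Y \subseteq \cl(A)$, each element of $Y$ lies in $\cl(X_y)$ with $X_y \subseteq A$ finite, and monotonicity together with the finite case gives $b \in \cl(\bigcup_y X_y)$.

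For $\cl_\mcM(A) \subseteq \acl_\mcM(A)$, it suffices by finite character to treat finite $A = \{a_1, \ldots, a_k\}$. By Assumption~\ref{assumptions on the closure}~(3), every sufficiently large $\mcA \in \mbC$ satisfies $\forall \bar{x}\, \exists^{\leq F_\mbC(k)} z\, \xi_k(\bar{x}, z)$, so this sentence is in $T_\mbC$. Hence $\xi_k(\bar{a}, z)$ is algebraic, and every $b$ with $\mcM \models \xi_k(\bar{a}, b)$ lies in $\acl_\mcM(\bar{a})$.

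The main obstacle is idempotence, because of the restricted form of~(2)(c). The key is to combine the uniform bound $F_\mbC$ with applying~(2)(c) at the larger level $m$, so that the required statement becomes a single first-order sentence.
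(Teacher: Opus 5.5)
Your proof is correct and follows the same route as the paper. In both, the closure axioms for finite sets are transferred from large members of $\mbC$ to $\mcM$ via sentences in $T_\mbC$, $\cl_\mcM$ is extended to arbitrary sets by unions over finite subsets, and the uniform bound $F_\mbC$ gives $\cl_\mcM(A) \subseteq \acl_\mcM(A)$; your handling of well-definedness and idempotence (passing to level $m = F_\mbC(k)$ so that (2)(c) applies) makes explicit what the paper calls straightforward, and only needs the cosmetic fixes of taking $m = \max_{j \le k} F_\mbC(j)$ in case $F_\mbC$ is not monotone and allowing fewer than $m$ witnesses in the degenerate case $k = 0$, $\cl_\mcM(\es) = \es$.
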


\noindent
{\bf Proof.}
By the definition of $T_\mbC$, if $\varphi$ is true in all sufficiently large members of $\mbC$,
then $\varphi$ is true in $\mcM$. 
This together with Assumption~\ref{assumptions on the closure}
implies that there is $\cl'_\mcM : \mcP_{fin}(M) \to \mcP_{fin}(M)$ which is a closure operator
on {\em finite} subsets of $M$.

Define $\cl_\mcM : \mcP(M) \to \mcP(M)$ by 
\[
\cl_\mcM(A) = \bigcup_{\substack{B \subseteq A \\ B \text{ is finite}}} \cl'_\mcM(B)
\quad \text{ for all } A \in \mcP(M).
\]
Now it is straightforward to verify that $\cl_\mcM$ is a closure operator on $M$.

For the last claim, suppose that $A \subseteq M$, $b \in M$ and $b \in \cl_\mcM(A)$.
Then $b \in \cl_\mcM(a_1, \ldots, a_k)$ for some finite sequence $a_1, \ldots, a_k \in A$,
so $\mcM \models \xi_k(a_1, \ldots, a_k, b)$.
By  Assumption~\ref{assumptions on the closure} the first-order sentence 
which expresses that 
\[
\text{``for all $x_1, \ldots, x_k$, there are at most $F_\mbC(k)$ $y$ such that
$\xi_k(x_1, \ldots, x_k, y)$''}
\]
is true in all sufficiently large members of $\mbC$, and hence it is true in $\mcM$.
Hence, only finitely many members of $M$ satisfy $\xi_k(a_1, \ldots, a_k, y)$ so 
$b \in \acl_\mcM(a_1, \ldots, a_k) \subseteq \acl_\mcM(A)$.
\hfill $\square$

\medskip
Recall the definition (from Section~\ref{preliminaries})
of two tuples having the same closure type. In the next lemma we consider $\cl_\mcM$ from 
Proposition~\ref{cl implies acl}.

\begin{lem}\label{tuples have the same closure type iff they satisfy the same formula, in M}
Let $\mcM \models T_\mbC$. For every $k \in \mbbN$ let
$\zeta_{k,i}(x_1, \ldots, x_k)$, for $i = 1, \ldots, s_k$, be the closure types from
Lemma~\ref{tuples have the same closure type iff they satisfy the same formula}.
For all $k \in \mbbN$ the following hold:
\begin{enumerate}
\item For all $a_1, \ldots, a_k \in M$, there is a unique $i$ such that
$\mcM \models \zeta_{k,i}(a_1, \ldots, a_k)$.

\item For all $a_1, \ldots, a_k, b_1, \ldots, b_k \in M$,
$(a_1, \ldots, a_k)$ has the same closure type as $(b_1, \ldots, b_k)$ if and only if there is $i$ such that
$\mcM \models \zeta_{k,i}(a_1, \ldots, a_k)$ and $\mcM \models \zeta_{k,i}(b_1, \ldots, b_k)$.
\end{enumerate}
\end{lem}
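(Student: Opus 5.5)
The plan is to transfer Lemma~\ref{tuples have the same closure type iff they satisfy the same formula} from sufficiently large members of $\mbC$ to $\mcM$ via $T_\mbC$. Everything needed is first-order expressible, and $T_\mbC$ contains every sentence true in all sufficiently large $\mcA \in \mbC$.

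For~(1), observe that part~(1) of Lemma~\ref{tuples have the same closure type iff they satisfy the same formula} is witnessed by the sentence
\[
\forall x_1 \ldots x_k \Big( \bigvee_{i=1}^{s_k} \zeta_{k,i}(\bar{x}) \ \wedge \ \bigwedge_{i \neq j} \neg\big(\zeta_{k,i}(\bar{x}) \wedge \zeta_{k,j}(\bar{x})\big)\Big).
\]
This sentence holds in all sufficiently large $\mcA \in \mbC$, so it belongs to $T_\mbC$ and is true in $\mcM$.

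For~(2), I first record that $\cl_\mcM$ from Proposition~\ref{cl implies acl} restricted to finite sets is defined by the formulas $\xi_k$. I also record that all relevant facts about it hold in $\mcM$ because they are first-order consequences of $T_\mbC$: the closure of a $k$-tuple has at most $F_\mbC(k)$ elements, it is closed under the function symbols (condition~(5)), and idempotence holds. The key semantic claim is that if $\mcM \models \zeta_{k,i}(\bar{a})$, then $\zeta_{k,i}$ determines the isomorphism type of $(\mcM \uhrc \cl_\mcM(\bar{a}), \bar{a})$ together with the relation ``$c \in \cl_\mcM(C)$'' on it, up to the enumeration $\bar{c}$. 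Concretely, $\zeta_{k,i}$ has the form~(\ref{equation in def of cl tau-type}) with a closure diagram $\pi(\bar{x},\bar{y})$. The second conjunct and the universal clause force the witnesses $\bar{c}$ to enumerate exactly $\cl_\mcM(\bar{a})\setminus \rng(\bar{a})$. The diagram $\pi$ fixes all atomic relations among $\bar{a}\bar{c}$, and hence, since $\cl_\mcM(\bar{a})$ is the universe of the generated substructure, it fixes the full substructure. It also fixes all instances $\xi_j$ among elements of $\bar{a}\bar{c}$, which by Proposition~\ref{cl implies acl} describe $c \in \cl_\mcM(C)$ for $C \subseteq \cl_\mcM(\bar{a})$. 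Given $\bar{a}, \bar{b}$ both satisfying $\zeta_{k,i}$, I pick witnesses $\bar{c}, \bar{d}$ and map $\bar{a}\bar{c} \mapsto \bar{b}\bar{d}$. This map is the required isomorphism preserving closure. Here $\pi$ being complete for equalities ensures well-definedness and bijectivity.

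For the converse direction of~(2), suppose $(\mcM,\bar{a}) \equiv_{\cl} (\mcM,\bar{b})$ via $f$. By~(1) there is a unique $i$ with $\mcM \models \zeta_{k,i}(\bar{a})$, with witnesses $\bar{c}$. Then $f(\bar{c})$ witnesses $\zeta_{k,i}(\bar{b})$, since $f$ preserves atomic formulas, preserves the $\xi_j$-instances by preservation of closure, and maps $\cl_\mcM(\bar{a})$ onto $\cl_\mcM(\bar{b})$. The main obstacle is the bookkeeping that a closure diagram, which only mentions closures of subsets of size $< n$ of $\bar{x}\bar{y}$ and tuples in specific orders, really captures ``$c \in \cl_\mcM(C)$'' for all $C \subseteq \cl_\mcM(\bar{a})$. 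I would handle this by noting that $\cl_\mcM(C)$ depends only on the set $C$, and that every $C$ has size at most $n+m-1$ unless it is the whole closed set, which is trivial. This mirrors exactly the (implicit) argument for Lemma~\ref{tuples have the same closure type iff they satisfy the same formula}.
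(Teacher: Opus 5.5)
Your proposal is correct and follows the paper's route. The paper simply asserts that the lemma follows from the definitions together with the fact that $\xi_k$ defines $\cl_\mcM$ in $\mcM$; you supply the details, transferring part~(1) via a sentence of $T_\mbC$ and building the closure-preserving isomorphism from the witnesses of $\zeta_{k,i}$ for part~(2). One harmless imprecision: the formula only forces $\rng(\bar{a}\bar{c}) = \cl_\mcM(\bar{a})$, not that $\bar{c}$ enumerates exactly $\cl_\mcM(\bar{a}) \setminus \rng(\bar{a})$, because $\pi$ may identify some $y_i$ with some $x_j$ or with each other — but the weaker fact is all your argument uses.
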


\noindent
{\bf Proof.}
This follows from the definitions of closure types, tuples having the same closure type, and since
$a_{k+1} \in \cl_\mcM(a_1, \ldots, a_k)$ if and only if $\mcM \models \xi_k(a_1, \ldots, a_{k+1})$.
\hfill $\square$

\begin{defin}\label{definition of HPC}{\rm
Let us say that $\mbC$ has the {\em hereditary property for closed substructures (HPC)}
if whenever $\mcA \in \mbC$, $\mcB \subseteq \mcA$, $\cl_\mcA(B)$ is defined
(which it is if $|A|$ is sufficiently large compared to $|B|$), and $\cl_\mcA(B) = B$,
then there is $\mcB' \in \mbC$ which is isomorphic to $\mcB$.
}\end{defin}

\begin{lem}\label{technical lemma about HPC}
Suppose that $\mbC$ has the HPC and that $\mcM \models T_\mbC$.
If $\mcA$ is a finite substructure of $\mcM$ and $\cl_\mcM(A) = A$
then there is $\mcA' \in \mbC$ which is isomorphic to $\mcA$.
\end{lem}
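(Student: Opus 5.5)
Let $\mcA$ be a finite substructure of $\mcM$ with $\cl_\mcM(A) = A$. We enumerate $A$ as $a_1, \ldots, a_k$ and express, by one first-order sentence, that a $k$-tuple with the same closure type as $(a_1,\ldots,a_k)$ exists. By Lemma~\ref{tuples have the same closure type iff they satisfy the same formula, in M} there is a unique $i$ with $\mcM \models \zeta_{k,i}(a_1, \ldots, a_k)$. Since $\cl_\mcM(a_1, \ldots, a_k) = A$, this closure type describes the full atomic diagram of $\mcA$ (as $A$ is its own closure, every element is among $a_1,\ldots,a_k$). It also says that no $z$ outside $\{a_1, \ldots, a_k\}$ satisfies $\xi_k(a_1, \ldots, a_k, z)$. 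So the formula $\zeta_{k,i}$ is equivalent to a self-contained closure type $\theta(x_1, \ldots, x_k)$ with no extra variables $y_j$. Here $\theta$ contains the atomic diagram of $\mcA$, the $\xi$-information, and the clause $\forall z(\xi_k(\bar{x}, z) \rightarrow \bigvee_j z = x_j)$.

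Next I transfer to $\mbC$. Since $\mcM \models \exists \bar{x}\, \theta(\bar{x})$, this sentence belongs to $T_\mbC$ by completeness of $T_\mbC$ (Corollary~\ref{0-1 law} and its definition). Hence it holds in all sufficiently large $\mcB \in \mbC$. Choose such a $\mcB$ that is large enough (using Assumption~\ref{assumptions on the closure}(1)) that $|B| \geq n_k$, so $\cl_\mcB$ is defined on $\mcP_k(B)$ and $\xi_k$ defines it. Pick $\bar{b}$ with $\mcB \models \theta(\bar{b})$.

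In $\mcB$ the last conjunct of $\theta$ gives $\cl_\mcB(b_1, \ldots, b_k) \subseteq \{b_1,\ldots,b_k\}$, hence equality. The atomic diagram part of $\theta$ gives that $a_j \mapsto b_j$ is a well-defined injective map preserving all relations. For function symbols and constants, $\theta$ specifies that $f(\bar{x}) = x_j$ for the appropriate $j$; this holds because $A$ is the universe of a substructure, so it is closed under the functions. Thus the substructure $\mcB' := \mcB \uhrc \{b_1, \ldots, b_k\}$ has universe exactly $\{b_1,\ldots,b_k\}$ (also by Assumption~\ref{assumptions on the closure}(5)) and is isomorphic to $\mcA$. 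Since $\cl_\mcB(B') = B'$ and $\mcB \in \mbC$, the HPC yields $\mcA' \in \mbC$ isomorphic to $\mcB'$, hence to $\mcA$.

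The main obstacle is the bookkeeping in the first step. One must make sure that the closure type, as defined in Definition~\ref{definition of closure type} via literals in the given variables, genuinely fixes the isomorphism type of $\mcA$ when $\tau$ has function symbols (possibly infinitely many symbols, where condition (6) is what keeps things finite). If $\tau$ is infinite, I would instead use that $\xi_k$ and the literals only need finitely many symbols to pin down the closure. To avoid this issue, I would fall back on Lemma~\ref{tuples have the same closure type iff they satisfy the same formula, in M}(2) together with Lemma~\ref{tuples have the same closure type iff they satisfy the same formula}(2). These state that satisfying the same $\zeta_{k,i}$ means the tuples have the same closure type, which by definition provides an isomorphism between $\mcM \uhrc \cl_\mcM(\bar{a})$ and $\mcB \uhrc \cl_\mcB(\bar{b})$. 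To apply this across $\mcM$ and $\mcB$, I would argue in the common elementary extension sense via the sentence $\exists\bar{x}\,\zeta_{k,i}(\bar{x})$.
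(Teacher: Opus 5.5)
Your argument is correct and is essentially the paper's proof: you take the closure type of an enumeration of $A$ in $\mcM$, transfer $\exists \bar{x}\,\zeta_{k,i}(\bar{x})$ (equivalently your self-contained $\theta$) to a sufficiently large $\mcB \in \mbC$ using completeness of $T_\mbC$, read off a closed substructure of $\mcB$ isomorphic to $\mcA$, and apply the HPC; your explicit check of the self-contained form and of function symbols just spells out what the paper summarizes as ``by the form of $\varphi_\mcA$''. Drop the fallback in your last paragraph, since it is unnecessary and does not work as stated: a finite $\mcB$ is not elementarily equivalent to $\mcM$, so there is no common elementary extension, and Lemma~\ref{tuples have the same closure type iff they satisfy the same formula}(2) and Lemma~\ref{tuples have the same closure type iff they satisfy the same formula, in M}(2) only compare tuples across large members of $\mbC$, respectively within a single model of $T_\mbC$, never between $\mcM$ and $\mcB$.
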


\noindent
{\bf Proof.}
Suppose that $\mbC$ has the HPC, $\mcM \models T_\mbC$ and that $\mcA$ is a finite substructure
of $\mcM$ such that $\cl_\mcM(A) = A$.
Let $A = \{a_1, \ldots, a_k\}$.
By Lemma~\ref{tuples have the same closure type iff they satisfy the same formula, in M},
there is $i \in \{1, \ldots, s_k\}$ and a $\tau$-closure type $\zeta_{k, i}(x_1, \ldots, x_k)$
such that $\mcM \models \zeta_{k, i}(a_1, \ldots, a_k)$.
Let $\varphi_\mcA$ be the sentence
\[
\exists x_1, \ldots, x_k \zeta_{k, i}(x_1, \ldots, x_k).
\]
Since $\mcM \models \varphi_\mcA$ we have $\varphi_\mcA \in T_\mbC$ and therefore $\varphi_\mcA$
holds in all sufficiently large members of $\mbC$.
Let $\mcB \in \mbC$ be such that $\mcB \models \varphi_\mcA$.
By the form of $\varphi_\mcA$ it follows that there is $\mcA' \subseteq \mcB$
such that $\mcA' \cong \mcA$ and $\cl_\mcB(A') = A'$.
Since $\mbC$ has the HPC it follows that there is $\mcA'' \in \mbC$ such that $\mcA'' \cong \mcA'$
and hence $\mcA'' \cong \mcA$.
\hfill $\square$

\begin{theor}\label{HPC and fsmp}
If $\mbC$ has the HPC then $T_\mbC$ has the FSMP.
\end{theor}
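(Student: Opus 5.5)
We must show: for every sentence $\varphi$ with $T_\mbC \models \varphi$, there is a finite substructure $\mcA$ of some model $\mcM$ of $T_\mbC$ with $\mcA \models \varphi$. The plan is to find, among the large members of $\mbC$ (which satisfy $\varphi$), one that embeds as a closed substructure of a model of $T_\mbC$.

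Fix $\varphi$ with $T_\mbC \models \varphi$, so $\varphi \in T_\mbC$ and $\varphi$ holds in every sufficiently large $\mcB \in \mbC$. Choose such a $\mcB$, large enough that $\cl_\mcB$ is defined on sets of size $|B|$. Closedness is then trivial, $\cl_\mcB(B) = B$, but we must realize $\mcB$ as a substructure of a model of $T_\mbC$. For this, note that the argument of Lemma~\ref{technical lemma about HPC} runs in the wrong direction, so we need a converse.

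Take a large $\mcB \in \mbC$ in which $\varphi$ holds, and a larger $\mcC \in \mbC$. Use the HPC together with closure types. The cleanest route is the following. Let $\mcM \models T_\mbC$ be countable. For each $n$, $\mcM$ contains finite closed substructures: take $\cl_\mcM(a_1,\dots,a_n)$, which is finite by Proposition~\ref{cl implies acl} and the bound $F_\mbC$, and closed. By Lemma~\ref{technical lemma about HPC}, each such closed finite $\mcA \subseteq \mcM$ is isomorphic to some $\mcA' \in \mbC$. Choosing the $a_i$ distinct with $n$ large, we get $|A| \ge n$, so $\mcA'$ is an arbitrarily large member of $\mbC$.

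Hence, taking $n$ beyond the threshold after which all members of $\mbC$ satisfy $\varphi$, we get $\mcA' \models \varphi$, and therefore $\mcA \models \varphi$, since $\mcA \cong \mcA'$. Since $\mcA$ is a finite substructure of $\mcM \models T_\mbC$, this gives the FSMP.

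The steps, in order, are:
\begin{enumerate}
\item The closure of a finite set in $\mcM$ is finite and closed. This uses idempotence of $\cl_\mcM$ from Proposition~\ref{cl implies acl}, together with the fact that $\cl_\mcM(X) \subseteq \acl_\mcM(X)$ has size at most $F_\mbC(|X|)$, transferred via $T_\mbC$.
\item $\mcM$ is infinite, because $T_\mbC$ contains ``there are at least $n$ elements'' for every $n$, by item (1) of Assumption~\ref{assumptions on the closure}. So closed finite substructures of arbitrarily large size exist.
\item Apply Lemma~\ref{technical lemma about HPC} to obtain members of $\mbC$ of arbitrarily large size.
\item Use the definition of $T_\mbC$ to conclude that $\varphi$ holds in these members.
\end{enumerate}

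The main obstacle is mild bookkeeping. We need $\mcM \uhrc \cl_\mcM(X)$ to have universe $\cl_\mcM(X)$, which transfers from condition (5) of Assumption~\ref{assumptions on the closure}. We also need to check that the threshold ``sufficiently large'' in the definition of $T_\mbC$ refers to $|A'|$, which can be made as large as needed by choosing $n$.
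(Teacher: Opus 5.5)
Your argument is correct, but it runs in the opposite direction from the paper's.

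You go from $\mcM$ to $\mbC$. You take the closure in $\mcM$ of $n \geq n_0$ distinct elements, which is a finite $\cl_\mcM$-closed substructure of size at least $n_0$. You then pull it into $\mbC$ with Lemma~\ref{technical lemma about HPC}, so its isomorphic copy in $\mbC$ satisfies $\varphi$.

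The paper does not use that lemma here; it goes from $\mbC$ to $\mcM$. It lists the finitely many isomorphism types $\mcA_1, \ldots, \mcA_s$ of closed substructures of members of $\mbC$ whose size lies between $n_0$ and $n_1$; there are finitely many by condition~(6) of Assumption~\ref{assumptions on the closure}. By HPC each $\mcA_i$ is isomorphic to a member of $\mbC$ of size at least $n_0$, so it satisfies $\varphi$. The paper then transfers the existential sentence $\bigvee_i \theta_i$ (``some $\mcA_i$ embeds'') from the large members of $\mbC$ to $\mcM$.

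What each route buys:
\begin{itemize}
\item The paper's route transfers only one existential sentence built from atomic diagrams. It never has to check anything about $\cl_\mcM$ inside $\mcM$, but the copy of $\mcA_i$ it finds need not be closed in $\mcM$.
\item Your route needs the transferred facts that $\cl_\mcM$ of a finite set is finite, idempotent, and closed under the function symbols; the last comes from condition~(5), as you note. In exchange it is shorter and proves more. The witnessing substructure is $\cl_\mcM$-closed, and in fact every finite closed substructure of $\mcM$ of size at least $n_0$ satisfies $\varphi$.
\end{itemize}

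You can delete the opening paragraphs before ``the cleanest route,'' since they are abandoned detours. In particular, you cannot in general pick $\mcB \in \mbC$ with $\cl_\mcB$ defined on sets of size $|B|$, because $n_{|B|}$ may exceed $|B|$.
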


\noindent
{\bf Proof.}
Suppose that $\mbC$ has the HPC and let $\varphi \in T_\mbC$.
Let $\mcM \models T_\mbC$. The model does not matter for the argument, but we can, for example, 
let $\mcM$ be the unique (up to isomorphism) countable model of $T_\mbC$.
We will show that $\varphi$ is true in a finite substructure of $\mcM$.

Since $\varphi \in T_\mbC$ it follows that there is $n_0 \in \mbbN$ such that $\mcA \models \varphi$
for all $\mcA \in \mbC$ such that $|A| \geq n_0$.
By Assumption~\ref{assumptions on the closure}
there is $n_1 \in \mbbN$ such that if $\mcA \in \mbC$ is sufficiently large relative to $n_0$, 
$B \subseteq A$ and $|B| \leq n_0$,
then $|\cl_\mcA(B)| \leq n_1$.
Let $\mcA_1, \ldots, \mcA_s$ enumerate all (up to isomorphism) $\tau$-structures $\mcA$ such that 
$n_0 \leq |A| \leq n_1$ and there is 
$\mcB \in \mbC$ such that $\mcA \subseteq \mcB$ and $\cl_\mcB(A) = A$.
By the choice of $n_0$ and HPC we have  $\mcA_i \models \varphi$ for all $i = 1, \ldots, s$.
For each $i = 1, \ldots, s$, let $\theta_i$ be a sentence which expresses that 
``there is a substructure which is ismorphic to $\mcA_i$'', so $\theta_i$ can be 
$\exists x_1, \ldots, x_m \psi(x_1, \ldots, x_m)$ where $\psi(x_1, \ldots, x_m)$ is a complete $\tau$-diagram
that is satisfied, in $\mcA_i$, by some enumeration of all elements of $A_i$.

From the choice of $\mcA_1, \ldots, \mcA_s$ it follows that for every $\mcB \in \mbC$ such that $|B| \geq n_0$
and every choice of distinct $b_1, \ldots, b_{n_0} \in B$,
$\mcB \uhrc \cl_\mcB(b_1, \ldots, b_{n_0})$ is isomorphic to $\mcA_i$ for some $i$
and hence $\mcB \models \bigvee_{i=1}^s \theta_i$.
Thus all sufficiently large members of $\mbC$ satisfy $\bigvee_{i=1}^s \theta_i$, so 
$\bigvee_{i=1}^s \theta_i \in T_\mbC$ and therefore $\mcM \models \bigvee_{i=1}^s \theta_i$.
Let $i$ be such that $\mcM \models \theta_i$.
Then $\mcM$ has a substructure which is isomorphic to $\mcA_i$ where we recall  that $\mcA_i \models \varphi$.
Thus $T_\mbC$ has the FSMP.
\hfill $\square$

\medskip

\noindent
We now consider the following stronger variant of 
Assumption~\ref{assumption that nonclosed types have realizations},
which will be relevant in the remaining sections:

\begin{assump}\label{stronger assumption that nonclosed types have realizations}
{\bf (Multiextension property)} {\rm
Suppose that $p(\bar{x}, y)$ is a closure type which is $y$-unbounded.
Let $q(\bar{x}) = p \uhrc \bar{x}$.
For every $m \in \mbbN$ there is $n_m$ such that if $\mcA \in \mbC$, $|A| \geq n_m$,
and $\mcA \models q(\bar{a})$, then there are distinct $b_1, \ldots, b_m \in A$ such
that $\mcA \models p(\bar{a}, b_i)$ for all $i = 1, \ldots, m$.
}\end{assump}

\begin{prop}\label{cl and acl coincide in limit structures}
Let $\mcM \models T_\mbC$ and let $\cl_\mcM : \mcP(M) \to \mcP(M)$ be the
closure operator from Lemma~\ref{cl implies acl}.
Assumption~\ref{stronger assumption that nonclosed types have realizations}
implies that for all $A \subseteq M$, $\cl_\mcM(A) = \acl_\mcM(A)$.
\end{prop}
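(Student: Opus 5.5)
One inclusion, $\cl_\mcM(A) \subseteq \acl_\mcM(A)$, is already given by Proposition~\ref{cl implies acl}. For the converse it suffices to treat finite $A$: if $b \in \acl_\mcM(A)$ then $b \in \acl_\mcM(\bar{a})$ for some finite tuple $\bar{a}$ from $A$, and then $b \in \cl_\mcM(\bar{a}) \subseteq \cl_\mcM(A)$ by monotonicity. So we fix $\bar{a} = (a_1, \ldots, a_n) \in M^n$ and $b \in M$ with $b \notin \cl_\mcM(\bar{a})$, that is, $\mcM \models \neg\xi_n(\bar{a}, b)$. The goal is to show that $b \notin \acl_\mcM(\bar{a})$ by producing, for every $m \in \mbbN$, at least $m$ distinct realizations in $\mcM$ of a formula satisfied by $b$ over $\bar{a}$.

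By Lemma~\ref{tuples have the same closure type iff they satisfy the same formula, in M} there is a closure type $p(\bar{x}, y) = \zeta_{n+1,i}(\bar{x}, y)$ with $\mcM \models p(\bar{a}, b)$. Let $q(\bar{x}) = p \uhrc \bar{x}$. The first point to check is that $p$ is $y$-unbounded. The formula $\xi_n(\bar{x}, y)$ is one of the formulas whose truth value is fixed inside the closure diagram, so either $p(\bar{x}, y) \models \xi_n(\bar{x}, y)$ or $p(\bar{x}, y) \models \neg\xi_n(\bar{x}, y)$. The first option is impossible, because $\mcM \models p(\bar{a}, b) \wedge \neg \xi_n(\bar{a}, b)$ and $\mcM \models T_\mbC$. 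One should confirm that ``$p \models \xi_n$'' in Definition~\ref{definition of bounded closure type} means truth in all sufficiently large members of $\mbC$, which transfers to $\mcM$. A cleaner route is to note directly that $p$ (being satisfiable in large $\mcA \in \mbC$ by Theorem~\ref{T is omega-categorical}) has $\neg\xi_n(\bar{x},y)$ as a conjunct of its diagram.

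Next, apply Assumption~\ref{stronger assumption that nonclosed types have realizations} to $p$. For each $m$, the sentence
\[
\forall \bar{x} \Big( q(\bar{x}) \rightarrow \exists y_1, \ldots, y_m \Big( \bigwedge_{i<j} y_i \neq y_j \wedge \bigwedge_{i=1}^m p(\bar{x}, y_i) \Big) \Big)
\]
holds in all $\mcA \in \mbC$ with $|A| \geq n_m$, so it belongs to $T_\mbC$ and holds in $\mcM$. Since $\mcM \models p(\bar{a}, b)$ implies $\mcM \models q(\bar{a})$, the formula $p(\bar{a}, y)$ has at least $m$ realizations in $\mcM$ for every $m$, hence infinitely many.

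Finally we must pass from ``$p(\bar{a},y)$ has infinitely many solutions'' to ``$b \notin \acl_\mcM(\bar{a})$''. This is the main obstacle, since algebraicity requires every formula true of $b$ over $\bar{a}$ to have finitely many solutions, not just one particular formula. Here we use Theorem~\ref{T is omega-categorical}. Any formula $\varphi(\bar{a}, y)$ witnessing algebraicity of $b$ is $T_\mbC$-equivalent to a disjunction of closure types in $(\bar{x}, y)$. By uniqueness in Lemma~\ref{tuples have the same closure type iff they satisfy the same formula, in M}, the closure types $\zeta_{n+1,j}$ are pairwise exclusive, so $b$ satisfies exactly the disjunct $p$. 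Therefore $p(\bar{a}, y)$ implies $\varphi(\bar{a}, y)$ in $\mcM$, and $\varphi(\bar{a}, y)$ has infinitely many solutions, which is a contradiction. Hence $\acl_\mcM(\bar{a}) \subseteq \cl_\mcM(\bar{a})$, which completes the proof.
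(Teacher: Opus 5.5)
Your proof is correct and follows the paper's argument essentially step by step. You reduce to a finite tuple $\bar{a}$, note that the closure type $p(\bar{x},y)$ of $\bar{a}b$ is $y$-unbounded, transfer the multiextension sentences to $\mcM$ to obtain infinitely many realizations of $p(\bar{a},y)$, and then use Theorem~\ref{T is omega-categorical} to show that every formula true of $b$ over $\bar{a}$ is implied by $p(\bar{a},y)$, so $b \notin \acl_\mcM(\bar{a})$.
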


\noindent
{\bf Proof.}
From 
Lemma~\ref{cl implies acl}
we have $\cl_\mcM(A) \subseteq \acl_\mcM(A)$, so only the converse inclusion needs to be proved.
We prove that if $b \notin \cl_\mcM(A)$ then $b \notin \acl_\mcM(A)$.
Since $b \in \cl_\mcM(A)$ implies that $b \in \cl_\mcM(A')$ for some finite $A' \subseteq A$,
and the same with $\acl_\mcM$ in place of $\cl_\mcM$, it follows that we only need to
show that for finite  $A \subseteq M$, if $b \notin \cl_\mcM(A)$ then $b \notin \acl_\mcM(A)$.

So let $\bar{a}$ be a finite sequence and suppose that $b \notin \cl_\mcM(\bar{a})$.
Let $p(\bar{x}, y)$ be the (unique up to equivalence in $T_\mbC$) closure type that is satisfied by 
$\bar{a}b$ in $\mcM$. 
Let $q(\bar{x}) = p \uhrc \bar{x}$.
As $b \notin \cl_\mcM(\bar{a})$ it follows that $p(\bar{x}, y)$ is $y$-unbounded.
It follows from Assumption~\ref{stronger assumption that nonclosed types have realizations}
that, for every $m \in \mbbN$, the sentence $\psi_m$ expressing
\[
\text{``for all $\bar{x}$, if $q(\bar{x})$ then there are distinct $y_1, \ldots, y_m$ such that $p(\bar{x}, y_i)$ for all $i$''}
\]
is true in all sufficiently large members of $\mbC$.
Hence $\mcM \models \psi_m$ for all $m \in \mbbN$ and therefore
there are infinitely many $b \in M$ such that $\mcM \models p(\bar{a}, b)$.

Let $\varphi(\bar{x}, y)$ be any formula with free variables $\bar{x}$ and $y$.
It follows from Theorem~\ref{T is omega-categorical}
that either 
\[
T_\mbC \models \forall \bar{x}, y \big(p(\bar{x}, y) \rightarrow \varphi(\bar{x}, y)\big) \text{ or }
T_\mbC \models \forall \bar{x}, y \big(p(\bar{x}, y) \rightarrow \neg\varphi(\bar{x}, y)\big).
\]
It follows that either $\varphi(\bar{a}, y)$ is not satisfied by any element of $\mcM$ or
$\varphi(\bar{a}, y)$ is satisfied by infinitely many elements of $\mcM$.
As $\varphi(\bar{x}, y)$ can be any formula with free variables $\bar{x}$ and $y$
it follows that $b \notin \acl_\mcM(\bar{a})$.
\hfill $\square$

\section{Characterization of FMP for $\omega$-categorical theories}\label{Characterization of FMP}

\noindent
For a complete $\omega$-categorical theory $T$ we define a {\em closure type (with respect to $T$)}
just as we did in 
Definition~\ref{definition of closure type}
but now we assume that, for all $k \in \mbbN$, $\xi_k(x_1, \ldots, x_{k+1})$ expresses (in all models of $T$)
that $x_{k+1}$ belongs to the algebraic closure of $\{x_1, \ldots, x_k\}$.

\begin{theor}\label{characterization of fmp}
Let $T$ be a complete $\omega$-categorical theory.\\
(i) $T$ has the FMP if and only if there is a class $\mbC$ of finite structures such
that 
\begin{enumerate}
\item $\mbC$ satisfies 
assumptions~\ref{assumptions on the closure},~\ref{assumption about a unique 0-closure},
and~\ref{stronger assumption that nonclosed types have realizations}, 
and 
\item $T$ is the set of all sentences $\varphi$ such that $\varphi$ is true in every sufficiently large member of $\mbC$.
\end{enumerate}
(ii) If $T$ has the FMP then, for every first-order formula $\varphi(\bar{x})$ that is consistent with $T$, there are 
closure types $p_1(\bar{x}), \ldots, p_s(\bar{x})$ such that $\varphi(\bar{x})$ is equivalent to 
$\bigvee_{i=1}^s p_i(\bar{x})$ in all models of $T$.
\end{theor}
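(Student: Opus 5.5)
The two directions of (i) are handled separately, and (ii) follows from the construction used for (i).

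\emph{($\Leftarrow$) of (i).} Suppose $\mbC$ satisfies Assumptions~\ref{assumptions on the closure}, \ref{assumption about a unique 0-closure} and~\ref{stronger assumption that nonclosed types have realizations}, and $T = T_\mbC$. The multiextension property implies the extension property, Assumption~\ref{assumption that nonclosed types have realizations}. Hence Theorem~\ref{T is omega-categorical} applies and shows that $T_\mbC$ has the FMP. In fact this is immediate: if $T \models \varphi$ then $\varphi \in T_\mbC$, so $\varphi$ holds in all sufficiently large members of $\mbC$, and these are finite.

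\emph{($\Rightarrow$) of (i).} Assume $T$ has the FMP.

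Choose a language and a class. Because $T$ is $\omega$-categorical, each $n$-type is isolated, and for each $k$ there is a formula $\xi_k$ defining ``$x_{k+1} \in \acl(x_1,\ldots,x_k)$''. Let $\varphi_0,\varphi_1,\ldots$ enumerate the sentences of $T$, put $\psi_n = \varphi_0\wedge\cdots\wedge\varphi_n$, and let $\mcA_n$ be a finite model of $\psi_n$. Take $\mbC = \{\mcA_n : n\in\mbbN\}$.

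Arrange the sizes. Choose $\psi_n$ so that it also expresses ``there are at least $n$ elements'', which is in $T$ since $\omega$-categorical theories have infinite models. Then the cardinalities of the $\mcA_n$ are unbounded, and any sentence of $T$ holds in $\mcA_n$ for all large $n$. Conversely, if $\varphi$ holds in all large $\mcA_n$, then $\neg\varphi\notin T$, so $\varphi\in T$ by completeness. This gives condition (2).

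Interpret the closure. In $\mcA$ define $\cl_\mcA(X)$, for $|X|\le k$, as the set of $b$ with $\mcA\models\xi_k(\bar a,b)$ for an enumeration $\bar a$ of $X$. Each property in Assumption~\ref{assumptions on the closure} that concerns tuples of length at most $k$ is a first-order sentence true in $T$. These include: independence of the chosen enumeration, extensivity, monotonicity, idempotence when the closure has size at most $k$, the bound $|\acl(X)|\le F(|X|)$ (from $\omega$-categoricity), and closure under the function symbols in the relevant part of $\tau$. Each therefore holds in $\mcA_n$ for $n\ge$ some $n_k$. The one exception is finiteness of the number of isomorphism types in (6): for relational or finite $\tau$ it is automatic, and in general only finitely many isomorphism types of closed $k$-sets occur in models of $T$, because of the finitely many $k$-types. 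I expect a small technicality here, since (6) as stated is not restricted to large $\mcA$; it can be handled by discarding the small members of $\mbC$. Assumption~\ref{assumption about a unique 0-closure} follows because $\acl(\es)$ is finite and its closure type is expressed by a sentence of $T$. Note that closure types as defined for $\mbC$ match closure types with respect to $T$, since the same $\xi_k$ are used.

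Obtain the multiextension property. Let $p(\bar x,y)$ be $y$-unbounded with $q = p\uhrc\bar x$. Unboundedness, interpreted in models of $T$ (closure types are consistent with $T$ exactly when realised in large $\mcA_n$), means $p$ does not imply $\xi_n$. Consequently, in a model $\mcM$ of $T$, any realization of $p(\bar a,y)$ lies outside $\acl(\bar a)$. The formula $p(\bar a,y)$ is then non-algebraic, so it has infinitely many solutions. This gives $T\models\forall\bar x(q(\bar x)\to\exists^{\ge m}y\,p(\bar x,y))$ for every $m$, and hence the same sentence holds in all large $\mcA_n$.

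The main obstacle is the first half of this step: showing that in $\mcM$ there is some realization of $p$ outside $\acl(\bar a)$, that is, that ``$y$-unbounded'' computed in $\mbC$ agrees with $p\not\models\xi_n$ in $T$. This holds because the conjunct $\neg\xi_n(\bar x,y)$ appears explicitly in $p$ whenever $\xi_n$ is not a conjunct of it. Checking this carefully is where I would spend the care.

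\emph{(ii).} Given (i), take $\mbC$ as constructed. By Theorem~\ref{T is omega-categorical}, every formula is equivalent modulo $T=T_\mbC$ to a finite disjunction of closure types. If $\varphi(\bar x)$ is consistent with $T$, the disjunction is nonempty. These closure types coincide with closure types with respect to $T$, because $\xi_k$ defines $\acl$ in models of $T$.
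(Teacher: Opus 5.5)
\textbf{There is a genuine gap in the multiextension step, and it is not the obstacle you identified.} The explicit conjunct $\neg\xi_n(\bar x,y)$ does show at once that any realization of $p(\bar a,y)$ lies outside $\acl(\bar a)$. What you never justify is that $\mcM\models q(\bar a)$ gives \emph{any} $b$ with $\mcM\models p(\bar a,b)$. A formula with no solutions is algebraic, so ``non-algebraic, hence infinitely many solutions'' presupposes exactly this.

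Existence would follow if $q$ isolated $\tp(\bar a)$: you could move a realization $\bar a'b'$ of $p$ onto $\bar a$ by an automorphism of the countable model. But closure types need not be complete types. Let $T$ be the theory of $\mcM=G_1\sqcup G_2$, two countable random graphs with no edges between them. Then $T$ is $\omega$-categorical and $\acl_\mcM(X)=X$ for all $X$, so closure types are, modulo $T$, just quantifier-free types. Let $p(x_1,x_2,y)$ say that $x_1,x_2,y$ are distinct, $\neg R(x_1,x_2)$, $R(x_1,y)$ and $R(x_2,y)$. It is $y$-unbounded, and $p\uhrc(x_1,x_2)$ says $x_1\neq x_2\wedge\neg R(x_1,x_2)$. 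For $a_1\in G_1$ and $a_2\in G_2$ this restriction holds, but $p(a_1,a_2,y)$ has no solution. So the sentence $\forall\bar x\,(q\to\exists^{\geq m}y\,p)$ that you place in $T$ is false in $\mcM$, and your $\mbC$ fails Assumption~\ref{stronger assumption that nonclosed types have realizations}.

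\textbf{No choice of $\mbC$ repairs this, because the statement itself fails for this $T$.} First, $T$ has the FMP: if $H$ is a finite graph with the $q$-extension property, then $H\sqcup H\equiv_q\mcM$ by playing Ehrenfeucht--Fra\"{i}ss\'{e} strategies componentwise. Now suppose some $\mbC$ satisfied Assumptions~\ref{assumptions on the closure}, \ref{assumption about a unique 0-closure} and~\ref{stronger assumption that nonclosed types have realizations} with $T=T_\mbC$. By Proposition~\ref{cl and acl coincide in limit structures}, the $\xi_k$ of $\mbC$ would define the trivial $\acl$ in $\mcM$. Theorem~\ref{T is omega-categorical} would then make every formula $T$-equivalent to a quantifier-free one. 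But $x_1\neq x_2\wedge\neg R(x_1,x_2)\wedge\exists z\,(R(x_1,z)\wedge R(x_2,z))$ (``nonadjacent, same component'') is not, and the same formula refutes (ii) directly.

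The paper's proof has the identical hole: it asserts $\mcM\models\varphi''_m$ without argument, and that sentence is false here. What your argument, and the paper's, actually proves is the characterization for those $\omega$-categorical $T$ in which every closure type isolates a complete type, i.e.\ in which (ii) holds. Under that extra hypothesis your existence step goes through via the automorphism argument above. Your ($\Leftarrow$) direction is fine and matches the paper's. Your choice of $\mbC$ as finite models $\mcA_n$ of $\psi_n$ with $|A_n|\geq n$ gives condition~(2) directly, which is cleaner than the paper's back-and-forth. It also avoids a defect in the paper's class of finite models of some $\varphi_m$: since $\varphi_{m+1}\models\varphi_m$, that class is just the class of finite models of $\varphi_0$.
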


\noindent
{\bf Proof.}
Let $T$ be a complete $\omega$-categorical theory.
We first prove (i).
Suppose that  $\mbC$ is a class of finite structures such that~(1) and~(2) hold.
From condition~(1) and Corollary~\ref{0-1 law} it follows that every first-order sentence is either true
in all sufficiently large members of $\mbC$ or false in all sufficiently large members of $\mbC$.
Now condition~(2) implies that $T$ has the FMP.

It remains to prove that if $T$ has the FMP then there is a class $\mbC$ of finite structures such that
conditions~(1) and~(2) hold.
Suppose that $T$ has the FMP.
Let $\mcM \models T$ be countable.
Note that since $T$ is $\omega$-categorical the following conditions hold:

\begin{enumerate}
\item[(I)]
\begin{enumerate}
\item $X \subseteq \acl_\mcM(X)$ for all $X \in \mcP(X)$,
\item if $Y \subseteq X \in \mcP(M)$ then $\acl_\mcM(Y) \subseteq \acl_\mcM(X)$, and
\item $\acl_\mcM(\acl_\mcM(X)) = \acl_\mcM(X)$ for all $X \in \mcP(M)$.
\end{enumerate}

\item[(II)] There is $F_T : \mbbN \to \mbbN$ such that if $X \in \mcP(M)$,
then $|\acl_\mcM(X)| \leq F_T(|X|)$.

\item[(III)] For all $k \in \mbbN$ there is a first-order $\tau$-formula $\xi_k(x_1, \ldots, x_{k+1})$ such that 
if $a_1, \ldots, a_{k+1} \in M$, then
\[
\text{$a_{k+1} \in \acl_\mcM(\{a_1, \ldots, a_k\})$ if and only if $\mcM \models \xi_k(a_1, \ldots, a_{k+1})$.}
\]
\end{enumerate}

Let $\cl_\mcM(A) = \acl_\mcM(A)$ for all $A \subseteq M$, so the above holds with $\cl_\mcM$ in place of $\acl_\mcM$.
From~(III) it follows that for each $m \in \mbbN$ there is a sentence $\varphi'_m$ which
expresses, in $\mcM$, that~(I) and~(II) hold if $|X| \leq m$.
We can define the notion of a closure type in, say, variables $\bar{x}$ just as we did in
Section~\ref{finite structures with cl} by using~(III). 
It follows that if a closure type $p(\bar{x})$ is satisfied by a finite sequence $\bar{a}$ of
elements from $M$, then $p(\bar{x})$ describes (a) the isomorphism type of $\mcM \uhrc \acl_\mcM(\bar{a})$
and (b) for all $B \subseteq \acl_\mcM(\bar{a})$ and $b \in \acl_\mcM(\bar{a}),$ if $b \in \acl_\mcM(B)$ or not.

Let us say that a closure type $p(x_1, \ldots, x_k, y)$ (with respect to $T$) is {\em $y$-bounded} if and only if
$p(x_1, \ldots, x_k, y) \models \xi_k(x_1, \ldots, x_k, y)$, which is equivalent to say that
if $\mcM \models p(a_1, \ldots, a_k, b)$ then $b \in \acl_\mcM(a_1, \ldots, a_k)$.
Otherwise it is $y$-unbounded.

It follows that there is a sentence $\varphi''_m$ expressing, in $\mcM$, that 
\begin{enumerate}
\item[] ``if $|\bar{x}| \leq m$, $p(\bar{x}, y)$ is a closure type which is $y$-unbounded, 
and $\bar{a}$ satisfies $q(\bar{x}) = p \uhrc \bar{x}$, then there are distinct $b_1, \ldots, b_m$
such that $p(\bar{a}, b_i)$ holds for all $i$''.
\end{enumerate}
Let $\varphi_m = \varphi'_m \wedge \varphi''_m$.
Note that $\varphi_{m+1} \models \varphi_m$ and $\mcM \models \varphi_m$
(and hence $T \models \varphi_m$)  for all $m \in \mbbN$.

Since $T$ has the FMP it follows that each $\varphi_m$ has a finite model.
Let 
\[
\mbC = \{\mcA : \text{ $\mcA$ is a finite model of $\varphi_m$ for some $m \in \mbbN$}\}.
\]
For every $k \in \mbbN$, $\mcA \in \mbC$, and $a_1, \ldots, a_{k+1}$, let
`$a_{k+1} \in \cl_\mcA(a_1, \ldots, a_k)$' mean that $\mcA \models \xi_k(a_1, \ldots, a_{k+1})$.
From the choice of $\varphi_m$ it follows that conditions~(1)--(4)
of Assumption~\ref{assumptions on the closure}
are satisfied by $\mbC$.
Since $T$ is $\omega$-categorical it follows that also conditions (5)--(6) of 
Assumption~\ref{assumptions on the closure}
are satisfied by $\mbC$.
Moreover, (as $\varphi_m \models \varphi''_m$) 
Assumption~\ref{stronger assumption that nonclosed types have realizations},
and hence Assumption~\ref{assumption that nonclosed types have realizations},
also holds for $\mbC$.

It follows from Theorem~\ref{asymptotic quantifier elimination for tau-structures}
that,
\begin{enumerate}
\item[] for every formula $\varphi(\bar{x})$ that is satisfied in arbitrarily large members of $\mbC$
there are closure types $p_1(\bar{x}), \ldots, p_s(\bar{x})$
such that for every sufficiently large $\mcA \in \mbC$,
$\mcA \models \forall \bar{x} \Big(\varphi(\bar{x}) \leftrightarrow \bigvee_{i=1}^s p_i(\bar{x})\Big)$.
\end{enumerate}

Since the isomorphism type of $\mcM \uhrc \acl_\mcM(\es)$ is determined by $T$ it follows that 
there is a unique (up to equivalence) closure type, say $p_\es$, without free variables  with respect to $T$ .
By the definition of $\mbC$, $p_\es$ will also be the unique closure type without free variables with respect to $\mbC$.
Hence Assumption~\ref{assumption about a unique 0-closure} is satisfied by $\mbC$.
It follows from Corollary~\ref{0-1 law}
that for every sentence $\varphi$, either it is true in all sufficiently large $\mcA \in \mbC$ or it is
false in all sufficiently large $\mcA \in \mbC$.
Let $T_\mbC$ be the set of all sentences $\varphi$ such that $\varphi$ holds in all sufficiently large $\mcA \in \mbC$.
As noted in Section~\ref{finite structures with cl} $T_\mbC$ is consistent, complete and $\omega$-categorical.

Let $\mcM_\mbC \models T_\mbC$ be countable and recall that $\mcM \models T$ is countable.
Since $\mbC$ satisfies 
Assumption~\ref{stronger assumption that nonclosed types have realizations}
it follows from Proposition~\ref{cl and acl coincide in limit structures}
that $\cl_{\mcM_\mbC}$ 
(defined by $a_{k+1} \in \cl_{\mcM_{\mbC}}(a_1, \ldots, a_k)$ if and only if
$\mcM_\mbC \models \xi_k(a_1, \ldots, a_{k+1})$)
is the same as $\acl_{\mcM_\mbC}$.
We will show that $\mcM \cong \mcM_\mbC$ from which it follows that $T = T_\mbC$.
To show that $\mcM \cong \mcM_\mbC$ we use a back-and-forth argument to construct isomorphisms
between (larger and larger) finite substructures of $\mcM$ and $\mcM_\mbC$, the union of which will
be the desired isomorphism from $\mcM$ to $\mcM_\mbC$.
Recall that if $(\mcM, \bar{a}) \equiv_{\cl} (\mcM_\mbC, \bar{b})$
then there is an isomorphism from $\mcM \uhrc \cl_\mcM(\bar{a})$
to $\mcM_\mbC \uhrc \cl_{\mcM_\mbC}(\bar{b})$ which maps $\bar{a}$ to $\bar{b}$.
Hence it suffices to show the following:

\medskip

\noindent
{\bf Claim.}
Suppose that $\bar{a} = (a_1, \ldots, a_k) \in M^k$,
$\bar{b} = (b_1, \ldots, b_k) \in (M_\mbC)^k$, and
$(\mcM, \bar{a}) \equiv_{\cl} (\mcM_\mbC, \bar{b})$.
If $a_{k+1} \in M$ ($b_{k+1} \in M_\mbC$) then there is $b_{k+1} \in M_\mbC$ ($a_{k+1} \in M$)
such that 
$(\mcM, \bar{a}a_{k+1}) \equiv_{\cl} (\mcM_\mbC, \bar{b}b_{k+1})$.

\medskip

\noindent
{\em Proof of the claim.}
Suppose that 
$\bar{a} = (a_1, \ldots, a_k) \in M^k$,
$\bar{b} = (b_1, \ldots, b_k) \in (M_\mbC)^k$, and
$(\mcM, \bar{a}) \equiv_{\cl} (\mcM_\mbC, \bar{b})$.
Then there is a closure type $q(\bar{x})$, where $\bar{x} = (x_1, \ldots, x_k)$ such that $\mcM \models q(\bar{a})$
and $\mcM_\mbC \models q(\bar{b})$.
Let $a_{k+1} \in M$. (The case when we pick $b_{k+1} \in M_\mbC$ is treated similarly.)

First suppose that $a_{k+1} \in \acl_\mcM(\bar{a})$.
From the definition of a closure type it follows that
there is a closure type $p(\bar{x}, x_{k+1})$ such that 
$p(\bar{x}, x_{k+1}) \models \xi_k(\bar{x}, x_{k+1})$,
$q(\bar{x}) \models \exists x_{k+1} p(\bar{x}, x_{k+1})$,
and $\mcM \models p(\bar{a}, a_{k+1})$.
As $\mcM_\mbC \models q(\bar{b})$ it follows that there is $b_{k+1} \in M_\mbC$ such that 
$\mcM_\mbC \models p(\bar{b}, b_{k+1})$.
Hence $(\mcM, \bar{a}a_{k+1}) \equiv_{\cl} (\mcM_\mbC, \bar{b}b_{k+1})$.

Now suppose that $a_{k+1} \notin \acl_\mcM(\bar{a})$.
Let $p(\bar{x}, x_{k+1})$ be the (unique up to equivalence) closure type that is satisfied by $\bar{a}a_{k+1}$.
Then $p(\bar{x}, x_{k+1}) \models \neg\xi_k(\bar{x}, x_{k+1})$
so $p(\bar{x}, x_{k+1})$ is $x_{k+1}$-unbounded.
Since $\mbC$ satisfies Assumption~\ref{stronger assumption that nonclosed types have realizations},
hence also Assumption~\ref{assumption that nonclosed types have realizations},
it follows that if $\mcA \in \mbC$ is large enough then 
$\mcA \models \forall \bar{x} \exists x_{k+1} \big(q(\bar{x}) \rightarrow p(\bar{x}, x_{k+1})\big)$.
Hence $T_\mbC \models \forall \bar{x} \exists x_{k+1} \big(q(\bar{x}) \rightarrow p(\bar{x}, x_{k+1})\big)$
so there is $b_{k+1} \in M_\mbC$ such that $\mcM_\mbC \models p(\bar{b}, b_{k+1})$.
Hence $(\mcM, \bar{a}a_{k+1}) \equiv_{\cl} (\mcM_\mbC, \bar{b}b_{k+1})$
so the claim is proved.

\medskip

\noindent
By the claim we conclude that $\mcM \cong \mcM_\mbC$ and hence $T = T_\mbC$.
We now recall that, by the choice of $\mbC$, we have 
\begin{enumerate}
\item $\mbC$ satisfies 
Assumptions~\ref{assumptions on the closure},~\ref{assumption about a unique 0-closure},
and~\ref{stronger assumption that nonclosed types have realizations}
\end{enumerate}
and, by the choice of $T_\mbC$,
\begin{enumerate}
\item[(2)] $T_\mbC$ is the set of all sentences $\varphi$ such that $\varphi$ is true in every 
sufficiently large member of $\mbC$.
\end{enumerate}
Since we have proved that $T = T_\mbC$ it follows that~(2) holds if $T_\mbC$ is replaced by $T$.
This completes the proof of part~(i) of Theorem~\ref{characterization of fmp}.

Above we concluded that for every formula $\varphi(\bar{x})$ that is consistent with $T$ there is a
disjunction of closure types to which $\varphi(\bar{x})$ is equivalent in all sufficiently large members of $\mbC$.
It follows that $\varphi(\bar{x})$ is equivalent to this disjunction of closure types in all models of $T_\mbC$.
As $T = T_\mbC$ we have also proved part~(ii) of Theorem~\ref{characterization of fmp}.
\hfill $\square$

\begin{rem}\label{remark on FSMP and C}{\rm
Given Theorem~\ref{HPC and fsmp} one can ask the following variation of
Theorem~\ref{characterization of fmp}
is possible? Is FSMP of a complete $\omega$-categorical theory $T$ equivalent to the existence of 
a class $\mbC$ of finite structures such that, 
assumptions~\ref{assumptions on the closure},~\ref{assumption about a unique 0-closure},
and~\ref{stronger assumption that nonclosed types have realizations} hold, $\mbC$ has HPC,
and $T$ is the set of all sentences that are true in all sufficiently large membersof $\mbC$?
Theorem~\ref{HPC and fsmp} gives that the existence of such $\mbC$ implies that $T$ has FSMP,
but I have not been able to prove the other direction although it first seemed like the proof of
Theorem~\ref{HPC and fsmp} could easily be modified for this purpose.
}\end{rem}

\section{Independence in a class of finite structures}\label{independence in C}

\noindent
In this section we assume that $\mbC$ is a class of finite structures such that
assumptions~\ref{assumptions on the closure},
\ref{assumption about a unique 0-closure}
and~\ref{stronger assumption that nonclosed types have realizations}
are satisfied.
Hence also (the weaker) Assumption~\ref{assumption that nonclosed types have realizations} holds.
Then Corollary~\ref{0-1 law} implies that for every first-order $\tau$-sentence $\varphi$ either $\varphi$ is true in all 
sufficiently large $\mcA \in \mbC$ or $\varphi$ is false in all sufficiently large $\mcA \in \mbC$.
{\em Let $T$ be the set of all sentences $\varphi$ such that $\varphi$ is true in every sufficiently large 
$\mcA \in \mbC$},
so $T$ is a consistent and complete theory.
By Theorem~\ref{T is omega-categorical},
$T$ is $\omega$-categorical.
By Proposition~\ref{cl and acl coincide in limit structures}, for all $\mcM \models T$, $k \in \mbbN$,  and
$a_1, \ldots, a_{k+1} \in M$ we have $a_{k+1} \in \acl_\mcM(a_1, \ldots, a_k)$
if and only if $\mcM \models \xi_k(a_1, \ldots, a_{k+1})$.

As shown in \cite{KopInd}, since  $T$ is $\omega$-categorical there is, for every $n \in \mbbN$, 
a notion of $n$-rank which
gives rise to a notion of $n$-independence in models of $T$ which we denote by $\indn$.
Technically speaking, for every $\mcM \models T$,
$\indn$ is a ternary relation on the set of subsets of $M$ that have ``small'' cardinality compared to $|M|$.
Finite subsets of $M$ are always considered small compared to $|M|$ since $M$ is infinite.
Thus $\indn$ is, in particular, a ternary relation on the set of finite subsets of $M$.
The notation $A \underset{C}{\indn} B$ means that the relation $\indn$ is satisfied by the triple $(A, B, C)$.
For sequences $\bar{a}$, $\bar{b}$, and $\bar{c}$, $\bar{a} \underset{\bar{c}}{\indn} \bar{b}$
means that $\rng(\bar{a}) \underset{\rng(\bar{c})}{\indn} \rng(\bar{b})$.

\begin{fact}\label{properties of n-independence} \cite{KopInd}
For every $n \in \mbbN$ and $\mcM \models T$ the following hold for all ``small'' $A, B, C, D \subseteq M$:
\begin{enumerate}

\item (invariance) If $\sigma$ is an elementary function (i.e. one that preserves the truth value of
first-order formulas) from some subset of $M$
that includes $ABC$ to some subset of $M$,
then $A \underset{C}{\indn} B$ if and only if $\sigma(A) \underset{\sigma(C)}{\indn} \sigma(B)$.

\item (local character) If $A$ is finite then there is a countable $C \subseteq B$ such that
$A \underset{C}{\indn} B$.

\item (finite character) $A \underset{C}{\indn} B$ if and only if for all finite $B' \subseteq B$,
$A \underset{C}{\indn} B'$.

\item (extension) If $k < \omega$, 
$A = \{a_1, \ldots, a_k\}$, and $C \subseteq B$, then there is $A' = \{a'_1, \ldots, a'_k\} \subseteq M$ such that
$\tp(a'_1, \ldots, a'_k / C) = \tp(a_1, \ldots, a_k / C)$ and $A' \underset{C}{\indn} B$.

\item (monotonicity)
 If $B \subseteq C \subseteq D$ and
$A \underset{B}{\indn} D$ then $A \underset{B}{\indn} C$ and $A \underset{C}{\indn} D$.

\item (transitivity) If $B \subseteq C \subseteq D$,
 $A \underset{B}{\indn} C$ and $A \underset{C}{\indn} D$, then $A \underset{B}{\indn} D$.

\end{enumerate}
\end{fact}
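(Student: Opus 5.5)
The statement is Fact~\ref{properties of n-independence}, quoted from \cite{KopInd}. I would reprove it from the definition of $n$-rank and let $\omega$-categoricity of $T$ carry the finiteness arguments. The plan is to first define, for a finite tuple $\bar{a}$ and a finite set $C$, an ordinal (in fact finite) rank $\rk_n(\bar{a}/C)$ as a foundation rank on complete types. Here $\rk_n(\bar{a}/C) \geq k+1$ if there is a finite $C' \supseteq C$ with $|C' \setminus C| \leq n$ and some $\bar{a}'$ with $\tp(\bar{a}'/C) = \tp(\bar{a}/C)$ such that $\rk_n(\bar{a}'/C') \geq k$ and $\tp(\bar{a}'/C')$ is ``$n$-narrowing'' over $C$ (for instance, it has only finitely many conjugates over $C$ while $\tp(\bar{a}/C)$ is nonalgebraic). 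Because $T$ is $\omega$-categorical, there are only finitely many complete types in a fixed number of variables over a fixed finite set. Hence the rank depends only on the type, is invariant under elementary maps, and is bounded by a number depending only on $|\bar{a}|$ and $n$; this boundedness is the key use of $\omega$-categoricity. I then set $A \underset{C}{\indn} B$, for finite $A, B, C$, iff $\rk_n(A/BC) = \rk_n(A/C)$, and extend to small infinite sets by finite character, which makes property (3) true by definition.

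The first property to check is monotonicity of the rank itself: $C \subseteq D$ implies $\rk_n(\bar{a}/D) \leq \rk_n(\bar{a}/C)$. I would prove this by induction on $k$ directly from the definition. Properties (1), (5) and (6) then follow formally. Invariance holds because the rank is computed from the type. Monotonicity and transitivity hold because, for $B \subseteq C \subseteq D$, we have $\rk_n(A/D) \leq \rk_n(A/C) \leq \rk_n(A/B)$, and equality at the two ends is equivalent to equality at both intermediate steps. For infinite sets, each of these is reduced to finite subsets via finite character. For local character (2), the values $\rk_n(A/B')$ for finite $B' \subseteq B$ form a nonincreasing family of natural numbers, so some finite $C \subseteq B$ attains the minimum. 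By monotonicity of the rank, $\rk_n(A/CB'') = \rk_n(A/C)$ for every finite $B'' \subseteq B$, so $A \underset{C}{\indn} B$ with $C$ finite, and in particular countable.

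The main obstacle is extension (4). Given $\bar{a}$ and $C \subseteq B$, I must find $\bar{a}' \equiv_C \bar{a}$ with $\rk_n(\bar{a}'/B) = \rk_n(\bar{a}/C)$, first for finite $B$ and then for arbitrary small $B$. For finite $B$, I would prove by induction on the rank that some completion of $\tp(\bar{a}/C)$ over $B$ does not drop rank. I would try a pigeonhole argument over the finitely many completions: if every completion over $B$ had smaller rank, then the witnesses to $\rk_n(\bar{a}/C) \geq k+1$ could be moved by automorphisms fixing $C$ so as to land over $B$. This would contradict the definition, using that the rank is realized by witnesses inside the countable saturated model. For infinite $B$, I would apply compactness to the partial type consisting of $\tp(\bar{a}/C)$ together with the formulas ``$\rk_n(\bar{x}/B') \geq \rk_n(\bar{a}/C)$'' for all finite $B' \subseteq B$. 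These formulas are first-order expressible because, by $\omega$-categoricity, each rank condition is a finite disjunction of complete types. The finite case shows this partial type is finitely satisfiable, so the compactness step is routine once the finite case is settled; the real difficulty lies in getting the definition of ``narrowing'' right so that the finite extension argument goes through.
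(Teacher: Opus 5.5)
There is a genuine gap, and it lies in the rank you use. The Fact, which the paper cites from \cite{KopInd} rather than proves, concerns the specific relation $\indn$ recalled in Section~\ref{independence in C}. For finite sets, $A \underset{C}{\indn} B$ iff $\rk^n(A/BC)=\rk^n(A/C)$, where $\rk^n(\bar a/\bar b)\ge\alpha$ iff there are $e_1,\dots,e_\alpha\in M_n\cap\acl\meq(\bar a)$ with $e_i\notin\acl\meq(e_1,\dots,e_{i-1},\bar b)$ for all $i$. Everything later in the paper depends on this particular rank: the symmetry criterion via exchange for elements of $n$-rank $1$, the formulas $\rho^{n,\ge\alpha}_{i,j}$, and the description of $\indo$ through $\cl_\mcA$. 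So proving (1)--(6) for a foundation rank of your own design does not prove the statement.

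Moreover, your rank is never actually defined. ``$n$-narrowing'' is only illustrated by a ``for instance'', and for extension (4) you give a plan (``I would try a pigeonhole argument'') whose key step you yourself leave open. Your boundedness claim is also unjustified as stated. Having finitely many types over a \emph{fixed} finite base does not bound a rank obtained by repeatedly enlarging the base. For $\rk^n$ the bound comes instead from the finiteness of $\acl\meq(\bar a)\cap M_n$, which is bounded in terms of $|\bar a|$ and $T$.

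Your formal derivations of (1), (2), (3), (5) and (6) are fine for any $\mbbN$-valued rank that depends only on the type and does not increase when the base grows. $\rk^n$ has these properties, since a chain witnessing $\rk^n(\bar a/BC)\ge\alpha$ also witnesses $\rk^n(\bar a/C)\ge\alpha$. With the correct definition, (4) needs no induction on rank.
\begin{itemize}
\item First let $C\subseteq B$ be finite, let $\alpha=\rk^n(\bar a/C)$, and fix a witnessing chain $e_1,\dots,e_\alpha$.
\item Choose inductively $e'_1,\dots,e'_\alpha$ with $e'_1\dots e'_i\equiv_C e_1\dots e_i$ and $e'_i\notin\acl\meq(e'_1,\dots,e'_{i-1},B)$. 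This is possible because the copy over $e'_1\dots e'_{i-1}C$ of the non-algebraic type $\tp(e_i/e_1\dots e_{i-1}C)$ has infinitely many realizations. By $\omega$-categoricity, $\acl\meq(e'_1,\dots,e'_{i-1},B)$ meets each sort in only finitely many elements.
\item Then take $\bar a'$ with $\bar a'e'_1\dots e'_\alpha\equiv_C\bar a e_1\dots e_\alpha$. This gives $\rk^n(\bar a'/B)\ge\alpha=\rk^n(\bar a'/C)\ge\rk^n(\bar a'/B)$, i.e.\ $\bar a'\underset{C}{\indn}B$.
\end{itemize}
Your compactness argument then passes this to small infinite $B$ and $C$, working in a sufficiently saturated model.
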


\noindent
Since $T$ is $\omega$-categorical and $\indn$ is invariant under elementary maps it follows that,
for all $i, j, k \in \mbbN$ where $i, j > 0$, there is a formula $\theta_{i, j, k}^n(\bar{x}, \bar{y}, \bar{z})$,
where $|\bar{x}| = i$, $|\bar{y}| = j$, and $|\bar{z}| = k$ such that for all $\mcM \models T$ and
$\bar{a} \in M^i$, $\bar{b} \in M^j$, and $\bar{c} \in M^k$,
\[
\bar{a} \underset{\bar{c}}{\indn} \bar{b} \quad \text{ if and only if } \quad 
\mcM \models \theta_{i, j, k}^n(\bar{a}, \bar{b}, \bar{c}).
\]

\subsection{Transfering $\indn$ from models of $T$ to members of $\mbC$}

Since $T$ is $\omega$-categorical it follows that for every $m \in \mbbN$ there is a sentence $\chi_m$
which (in all models of $T$) expresses that properties (2)--(6) of  Fact~\ref{properties of n-independence} hold
for all $A, B, C, D$ of cardinality at most $m$.
By the choice of $T$, for all $m \in \mbbN$,  $\chi_m$ is true in all sufficiently large members of $\mbC$.
For every $\mcS\in \mbC$ and sequences $\bar{a}, \bar{b}, \bar{c}$ of length at most $m$ of elements from $S$, 
define
\[
\bar{a} \underset{\bar{c}}{\indn} \bar{b} \quad \text{ if and only if } \quad
\mcS \models \theta_{|\bar{a}|, |\bar{b}|, |\bar{c}|}^n(\bar{a}, \bar{b}, \bar{c}).
\]
It follows that, for every $m \in \mbbN$, $\indn$ has all properties (2)--(6) of Fact~\ref{properties of n-independence}
in all sufficiently large $\mcS \in \mbC$ for all $A, B, C, D \subseteq S$ of cardinality at most $m$.

Ideally, we would like that $\indn$ also has the following property:
\begin{enumerate}
\item[(7)] (symmetry) If $A \underset{C}{\indn} B$ then $B \underset{C}{\indn} A$.
\end{enumerate}
The results in \cite{KopInd} do not guarantee that $\indn$ is symmetric in $M$, but they give a characterization of when it is. 
An adaptation of those results 
(derived from the proof of \cite[Proposition~5.13]{KopInd})
to the case when we consider sets with a common finite bound on the cardinality gives the 
next fact which uses the notion of $n$-rank from \cite{KopInd}.
For all $n \in \mbbN$ and finite sequences $\bar{a}$ and $\bar{b}$ of elements from $M$, 
let $\rk^n(\bar{a} / \bar{b})$ denote the {\em $n$-rank of $\bar{a}$ over $\bar{b}$}
(which will be explained below),
which is always a natural number for a finite sequence $\bar{a}$ as shown in \cite{KopInd}.

\begin{fact}\label{characterization of symmetry} \cite{KopInd}
Let $n \in \mbbN$.
For every $m \in \mbbN$ there is $\kappa(m) \in \mbbN$ such that 
condition~($e_{\kappa(m)}$) below implies condition~($s_m$):
\begin{enumerate}
\item[($e_{\kappa(m)}$)] (exchange property up to $\kappa(m)$) For every $\mcM \models T$, all
$C \in \mcP_{\kappa(m)}(M)$, $2 \leq k \leq \kappa(m)$, and $a_1, \ldots, a_k \in M$,
if $\rk^n(a_i / C) = 1$ for all $i = 1, \ldots, k$, and
$a_k \in \acl(\{a_1, \ldots, a_{k-1}\} \cup C) \setminus \acl(\{a_2, \ldots, a_{k-1}\} \cup C)$,
then $a_1 \in \acl(\{a_2, \ldots, a_k\} \cup C)$.

\item[($s_m$)] (symmetry up to $m$) For every $\mcM \models T$ and all $A, B, C \in \mcP_m(M)$, 
if $A \underset{C}{\indn} B$ then $B \underset{C}{\indn} A$.
\end{enumerate}
\end{fact}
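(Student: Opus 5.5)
We must show that for each $m$ some finite level of the exchange property forces symmetry for sets of size at most $m$. The plan is to trace the proof of \cite[Proposition~5.13]{KopInd}, which shows that exchange for elements of $n$-rank 1 over arbitrary finite (small) sets implies symmetry of $\indn$, and to check that only boundedly many elements and parameters are ever used. Since $T$ is $\omega$-categorical, there are only finitely many types of $(A,B,C)$ with $A, B, C \in \mcP_m(M)$. Moreover $\acl$ of a set of size at most $m$ has size at most $F_T(m)$, and $n$-ranks of tuples of length at most $m$ are bounded natural numbers. These three finiteness facts are what should make a finite $\kappa(m)$ possible.

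\textbf{Step 1.} Reduce ($s_m$) to the single-element case. I expect (as in \cite{KopInd}) that this follows by induction using transitivity and monotonicity (Fact~\ref{properties of n-independence}), which replace $A$ and $B$ by increasing chains of single elements over growing bases. Here $\indn$ is defined through $n$-rank: $A \underset{C}{\indn} B$ iff $\rk^n(\bar a/ BC) = \rk^n(\bar a/C)$. So symmetry amounts to the additivity identity $\rk^n(\bar a/\bar b\bar c) + \rk^n(\bar b/\bar c) = \rk^n(\bar b/\bar a\bar c)+\rk^n(\bar a/\bar c)$ for tuples of length at most $m$.

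\textbf{Step 2.} Prove additivity by decomposing a tuple of rank $r$ over $C$ into a chain of rank-1 steps. Choose $a'_1,\dots,a'_r$, with $a'_i$ in $\acl$ of the earlier elements and $\bar a C$, such that $\rk^n(a'_i / C a'_1\dots a'_{i-1}) = 1$ and $\bar a \subseteq \acl(C a'_1 \dots a'_r)$. The exchange property for rank-1 elements then gives a matroid-like dimension theory on rank-1 elements over $C$, from which additivity follows. The parameter sets appearing here are subsets of $\acl(\bar a\bar b\bar c)$, so their size is bounded by $F_T(3m)$. The lengths $k$ of the exchange configurations are bounded by the sum of the rank bounds, which depends only on $m$ because $\omega$-categoricity gives finitely many types.

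\textbf{Step 3.} Set $\kappa(m)$ to be the maximum of $F_T(3m)$ and the bound on $k$ from Step~2. Then ($e_{\kappa(m)}$) supplies every exchange instance the argument uses.

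\textbf{Main obstacle.} The hard part is verifying in Step~2 that the rank-1 decomposition in \cite{KopInd} never leaves $\acl$ of the original finite data, and that no use of extension (Fact~\ref{properties of n-independence}(4)) introduces unboundedly many new parameters. My first attempt is to show that any extension step can be replaced, up to type, by realizations inside a set of bounded size, using finiteness of types over sets of size at most $F_T(3m)$.
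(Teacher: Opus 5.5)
Your overall strategy (rerun the proof that exchange for $n$-rank-one elements gives symmetry, and check that everything stays inside a bounded set) matches the paper's one-line justification. However, Step~2, which you treat as routine, breaks down for $n\geq 1$, and not for bookkeeping reasons.

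The $n$-rank counts chains in $\acl\meq(\bar a)\cap M_n$. So a decomposition $a'_1,\dots,a'_r$ into rank-one steps in general consists of imaginaries, taken over bases $Ca'_1\dots a'_{i-1}$ that contain imaginaries. This leaves you stuck either way:
\begin{itemize}
\item If you insist that the $a'_i$ be real (as your bound $F_T(3m)$ suggests), such a decomposition need not exist.
\item If you allow them in $M_n$, then ($e_{\kappa(m)}$) says nothing about them, since it only concerns real $a_1,\dots,a_k\in M$, real $C\in\mcP_{\kappa(m)}(M)$ and real $\acl$. So the claim in Step~3 that it supplies every exchange instance you use is false.
\end{itemize}

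In fact, when $n\geq 1$, no argument using only $\omega$-categoricity (yours uses nothing more) can derive ($s_m$) from ($e_{\kappa}$) as literally stated. Here is a counterexample:
\begin{itemize}
\item Let $\mcN$ be the Fra\"{i}ss\'{e} limit of the finite structures with universe $U_0\sqcup U_1$ and a total map $f$ from pairs of distinct elements of $U_0$ into $U_1$. It is $\omega$-categorical, and $\acl_\mcN$ is generated substructure.
\item Let $\mcM$ arise by replacing each point of $\mcN$ by an infinite class of a new equivalence relation $E$, pulling back $U_0$, $U_1$ and the graph of $f$. Identify $M/E$ with $N$.
\item Then $\acl_\mcM(X)=X$ for every $X\subseteq M$, so ($e_\kappa$) holds trivially for every $\kappa$.
\item But $M_1$ contains the $E$-classes. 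Take $a_1,a_2\in U_0$ in different classes and $a_3$ with $[a_3]_E=f([a_1]_E,[a_2]_E)$. Then $\rk^1(a_1/a_2)=\rk^1(a_1/a_2a_3)=2$, witnessed by $[a_1]_E,a_1$.
\item On the other hand $\rk^1(a_3/a_2)=2$ while $\rk^1(a_3/a_1a_2)=1$, because $[a_3]_E\in\acl\meq(a_1a_2)$.
\end{itemize}
So $\{a_1\}$ is $1$-independent from $\{a_3\}$ over $\{a_2\}$ but not conversely, and ($s_1$) fails for $n=1$. The same example shows that a real decomposition can fail to exist: $\rk^1(a_1/a_2)=2$ while $\acl_\mcM(a_1a_2)=\{a_1,a_2\}$.

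What your plan does prove is the case $n=0$. There all pieces are real and lie in $\acl(\bar a\bar b\bar c)$. An element of minimal rank in $\acl(e)\setminus\acl(D)$ has rank $1$ over $D$, so chains with rank-one steps exist. A Steinitz replacement that uses only the instance $k=2$ of exchange, over bases inside $\acl(\bar a\bar b\bar c)$, shows that all maximal such chains have the same length. This gives additivity of $\rk^0$, hence symmetry, with $\kappa(m)=\max\{2,F_T(3m)\}$. For $n\geq 1$, the exchange hypothesis has to range over elements of $M_n$ and finite bases from $M_n$, with $\acl\meq$ in place of $\acl$. With that hypothesis, essentially the same argument runs inside the finite set $\acl\meq(\bar a\bar b\bar c)\cap M_n$.

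Two smaller points:
\begin{itemize}
\item Step~1 is unjustified. The available monotonicity and transitivity act on the right-hand set and the base, and cannot split the left-hand set. You do not need Step~1 anyway, since additivity handles tuples directly.
\item Your \emph{main obstacle} disappears once the hypothesis is correct, because $T$ is a fixed complete theory. If ($s_m$) fails, the unbounded theorem produces a failure of exchange at some finite configuration. Any $\kappa(m)$ at least its size makes ($e_{\kappa(m)}$) false, so the implication holds vacuously.
\end{itemize}
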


\noindent
By the $\omega$-categoricity of $T$ there is, for all $i, j, n, \alpha \in \mbbN$, a formula 
$\rho_{i, j}^{n, \alpha}(\bar{x}, \bar{y})$
with $|\bar{x}| = i$ and $|\bar{y}| = j$ such that for all $\mcM \models T$, $\bar{a} \in M^i$ and $\bar{b} \in M^j$,
\[
\rk^n(\bar{a} / \bar{b}) = \alpha \quad \text{ if and only if } \quad 
\mcM \models \rho_{i, j}^{n, \alpha}(\bar{a}, \bar{b}).
\]
Now property~($e_{\kappa(m)}$) of Fact~\ref{characterization of symmetry}
is expressed by the conjunction of all sentences of this form where $k, l \leq \kappa(m)$:
\begin{equation*}
\begin{aligned}
&\forall x_1, \ldots, x_k, y_1, \ldots, y_l \bigg( \bigg[
\bigwedge_{i=1}^k \rho_{1, l}^{n, 1}(x_i, y_1, \ldots, y_l)  \\ 
&\wedge \ 
\xi_{k-1 + l}(x_1, \ldots, x_{k-1}, y_1, \ldots, y_l, x_k) \\ 
&\wedge \
\neg\xi_{k-2 + l}(x_2, \ldots, x_{k-1}, y_1, \ldots, y_l, x_k) \bigg] \\
&\longrightarrow
\xi_{k-1 + l}(x_2, \ldots, x_{k}, y_1, \ldots, y_l, x_1) \bigg).
\end{aligned}
\end{equation*}
By using the formulas $\theta_{i, j, k}^n$ with $i, j, k \leq m$ it is easy to see that condition~($s_m$)
of Fact~\ref{characterization of symmetry}
can be expressed by a first-order sentence.
Hence there is a first-order sentence which expresses that condition~($e_{\kappa(m)}$) implies condition~($s_m$).
From the definition of $T$ it follows that this sentence is true in all sufficiently large members of $\mbC$
and this implies the following:

\begin{lem}\label{exchange property and symmetry in C}
Condition~($e^\mbC_{\kappa(m)}$) below implies~($s_m^\mbC$):
\begin{enumerate}
\item[($e^\mbC_{\kappa(m)}$)] 
{\rm (exchange property up to $\kappa(m)$ in $\mbC$)} For all sufficently large $\mcS \in \mbC$,
$C = \{c_1, \ldots, c_l\} \subseteq S$ with $l \leq \kappa(m)$, $2 \leq k \leq \kappa(m)$, and $a_1, \ldots, a_k \in M$,
if $\mcS \models \rho_{1, l}^{n, 1}(a_i, c_1, \ldots, c_l)$ for all $i = 1, \ldots, k$, and
$a_k \in \cl_\mcS(\{a_1, \ldots, a_{k-1}\} \cup C) \setminus \cl_\mcS(\{a_2, \ldots, a_{k-1}\} \cup C)$,
then $a_1 \in \cl_\mcS(\{a_2, \ldots, a_k\} \cup C)$.

\item[($s_m^\mbC$)] {\rm (symmetry up to $m$ in $\mbC$)} For all sufficiently large $\mcS \in \mbC$ 
and all $A, B, C \in \mcP_m(S)$, 
if $A \underset{C}{\indn} B$ then $B \underset{C}{\indn} A$.
\end{enumerate}
\end{lem}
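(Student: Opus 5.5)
The plan is to transfer the first-order sentence expressing ``$(e_{\kappa(m)}) \Rightarrow (s_m)$'' from $T$ to the large members of $\mbC$, and then read off its meaning inside a single $\mcS$. Concretely, let $E_m$ be the conjunction, over all $k,l \leq \kappa(m)$ with $2 \leq k$, of the displayed sentences preceding the lemma. Let $S_m$ be the sentence
\[
\forall \bar{x}\, \bar{y}\, \bar{z} \bigl(\theta^n_{i,j,l}(\bar{x},\bar{y},\bar{z}) \rightarrow \theta^n_{j,i,l}(\bar{y},\bar{x},\bar{z})\bigr),
\]
conjoined over all $1 \leq i,j \leq m$ and $l \leq m$. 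By Fact~\ref{characterization of symmetry}, every model of $T$ satisfying $E_m$ satisfies $S_m$. Since $T$ is complete, this gives $E_m \rightarrow S_m \in T$. By the choice of $T$, there is therefore $N_0$ such that every $\mcS \in \mbC$ with $|S| \geq N_0$ satisfies $E_m \rightarrow S_m$.

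Next I would check that, for sufficiently large $\mcS$, the sentence $E_m$ expresses exactly the exchange condition in ($e^\mbC_{\kappa(m)}$) inside $\mcS$. By Assumption~\ref{assumptions on the closure}(4), once $|S| \geq n_{\kappa(m)+\kappa(m)}$ (so that all relevant $\xi_r$ with $r \leq 2\kappa(m)$ define $\cl_\mcS$), we have $\mcS \models \xi_r(\bar{d}, e)$ if and only if $e \in \cl_\mcS(\rng(\bar{d}))$. The $\rho$-conjuncts are used verbatim in ($e^\mbC_{\kappa(m)}$). Likewise, by the definition of $\indn$ on $\mcS$ via $\theta^n$, the sentence $S_m$ says precisely that $A \underset{C}{\indn} B$ implies $B \underset{C}{\indn} A$ for $A,B,C \in \mcP_m(S)$. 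One small point needs care here. A set in $\mcP_m(S)$ may be enumerated by a tuple with or without repetitions, and $C$ or $A$ may be empty, while the paper requires $i,j>0$. I would handle this by noting that $\indn$ depends only on ranges, which is invariance in $T$ and transfers to large $\mcS$. For empty $A$ or $B$, symmetry is either trivially included or excluded by convention.

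The final step is to combine these. Assume ($e^\mbC_{\kappa(m)}$), so there is $N_1$ such that all $\mcS \in \mbC$ with $|S|\ge N_1$ satisfy the exchange condition. For $|S| \geq \max(N_0, N_1, n_{2\kappa(m)})$, the previous paragraph gives $\mcS \models E_m$, and hence $\mcS \models S_m$, which is ($s^\mbC_m$). The main obstacle is this bookkeeping: making sure the threshold of ``sufficiently large'' is uniform. The threshold depends only on $m$ and $n$, since there are only finitely many $k,l,i,j$ involved. The issue is not the logical transfer itself, which is immediate from the completeness of $T$ and Corollary~\ref{0-1 law}.
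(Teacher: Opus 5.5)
Your proposal is correct and follows the paper's own argument. You write $(e_{\kappa(m)})\Rightarrow(s_m)$ as one first-order sentence, built from the displayed exchange sentences and the formulas $\theta^n_{i,j,k}$. By Fact~\ref{characterization of symmetry} and the completeness of $T$, this sentence lies in $T$, so it holds in all sufficiently large $\mcS\in\mbC$. There the $\xi_r$ define $\cl_\mcS$, so the sentence says exactly that $(e^{\mbC}_{\kappa(m)})$ implies $(s^{\mbC}_m)$.

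Your extra bookkeeping only makes explicit what the paper leaves implicit: uniform thresholds, and the distinction between sets and tuples. One small correction to it: take $|S|\geq\max_{r\leq 2\kappa(m)} n_r$ rather than $|S|\geq n_{2\kappa(m)}$, since the $n_r$ need not be monotone.
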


So far we have not explained what $A \underset{C}{\indn} B$ means for subsets of a model $\mcM \models T$
(or what it means in structures in $\mbC$).
By \cite[Definition~4.1]{KopInd}, for finite $A, B, C \subset M$ we have
\[
A \underset{C}{\indn} B \quad \text{ if and only if } \quad \rk^n(A / BC) = \rk^n(A / C).
\]
So to explain what $\indn$ means it now suffices to explain what $\rk^n$ means.
For any positive $n \in \mbbN$, let $M_n$ denote the subset of $M\meq$ that corresponds every
imaginary element that corresponds to an equivalence class of a $\es$-definable equivalence relation on $M^k$
for some $k \leq n$.
Let $\acl\meq$ denote the algebraic closure operator in $\mcM\meq$.
By \cite[Lemma~3.5]{KopInd}, for $\alpha \in \mbbN$ and finite sequences $\bar{a}$ and $\bar{b}$
of elements from $M$ we have
\begin{enumerate}
\item[] $\rk^n(\bar{a} / \bar{b}) \geq \alpha$ if and only if there are 
$a'_1, \ldots, a'_\alpha \in M_n$ such that for all $i = 1, \ldots, \alpha$,
$a'_i \in \acl\meq(\bar{a})$ and 
$a'_i \notin \acl\meq(a'_1, \ldots, a'_{i-1}, \bar{b})$.
\end{enumerate}
By \cite[Definition~3.2]{KopInd}, 
\begin{enumerate}
\item[] $\rk^n(\bar{a} / \bar{b}) = \alpha$ if and only if 
$\rk^n(\bar{a} / \bar{b}) \geq \alpha$ and $\rk^n(\bar{a} / \bar{b}) \not\geq \alpha+1$.
\end{enumerate}
As $T$ is $\omega$-categorical and $\bar{a}$ is a finite sequence, 
$\acl\meq(\bar{a}) \cap M_n$ is finite and its cardinality is bounded by some $\beta \in \mbbN$ 
that depends only on $l = |\bar{a}|$
and $T$. It follows that for this $\beta$, $\rk^n(\bar{a} / \bar{b}) \leq \beta$ for all $\bar{a}$ of length at most $l$
and for all $\bar{b}$ (of any length).
By \cite[Lemma~3.9]{KopInd} we have $\rk^n(\bar{a} / \bar{b}\bar{c}) \leq \rk^n(\bar{a} / \bar{b})$
for all finite sequences $\bar{a}$, $\bar{b}$, and $\bar{c}$.

\subsection{The case $n = 0$}

It follows from the characterization of $\rk^n$ above that if we let $\rho_{i, j}^{0, \geq \alpha}(\bar{x}, \bar{y})$, 
where $i = |\bar{x}|$ and $j = |\bar{y}|$, be the formula
\[
\exists z_1, \ldots, z_\alpha \bigg(
\bigwedge_{l=1}^\alpha \bigg[
\xi_i(\bar{x}, z_l) \ \wedge \ 
\neg\xi_{j+l-1}(\bar{y}, z_1, \ldots, z_l) \bigg] \bigg)
\]
then, for all $\bar{a} \in M^i$,  $\bar{b} \in M^j$ and $\alpha \in \mbbN$,
\[
\rk^0(\bar{a} / \bar{b}) \geq \alpha \quad \Longleftrightarrow \quad
\mcM \models \rho_{i, j}^{0, \geq \alpha}(\bar{a}, \bar{b})
\]
and 
\[
\rk^0(\bar{a} / \bar{b}) = \alpha \quad \Longleftrightarrow \quad
\mcM \models \rho_{i, j}^{0, \geq \alpha}(\bar{a}, \bar{b})
\wedge \neg\rho_{i, j}^{0, \geq \alpha+1}(\bar{a}, \bar{b}).
\]
Let $i, j, k \in \mbbN$ and let $\beta_i \in \mbbN$ be such that $\rk^n(\bar{a}) \leq \beta_i$ whenever $\bar{a} \in M^i$.
Now it follows that if $\bar{a} \in M^i$, $\bar{b} \in M^j$, and $\bar{c} \in M^k$, then
\begin{align*}
&\mcM \models \theta_{i, j, k}^0(\bar{a}, \bar{b}, \bar{c})  \ \Longleftrightarrow \\
&\bar{a} \underset{\bar{c}}{\indo} \bar{b} \ \Longleftrightarrow \\ 
&\mcM \models \bigwedge_{\gamma=1}^{\beta_i} \Big(\rho_{i, k}^{0, \geq \gamma}(\bar{a}, \bar{c})
\rightarrow \rho_{i, j+k}^{0, \geq \gamma}(\bar{a}, \bar{b}, \bar{c}) \Big).
\end{align*}
Hence
\[
\mcM \models \lambda_{i, j, k}^0 := 
\forall \bar{x}, \bar{y}, \bar{z} \bigg(
\theta_{i, j, k}^0(\bar{x}, \bar{y}, \bar{z}) \ \longleftrightarrow \ 
\bigwedge_{\gamma=1}^\beta \Big(\rho_{i, k}^{0, \geq \gamma}(\bar{x}, \bar{z})
\rightarrow \rho_{i, j+k}^{0, \geq \gamma}(\bar{x}, \bar{y}, \bar{z}) \Big) \bigg).
\]
Thus $\lambda_{i, j, k}^0$ is true in all sufficiently large members of $\mbC$ which 
gives the following with $\beta_i$ as above:

\begin{lem}\label{characterization of independence in C}
If $\mcA \in \mbC$
is sufficiently large compared to $i, j$ and $k$, $\bar{a} \in A^i$, $\bar{b} \in A^j$, and $\bar{c} \in A^k$, then
\begin{align*}
&\bar{a} \underset{\bar{c}}{\nindo} \bar{b} \ \Longleftrightarrow \ 
\neg \theta_{i, j, k}^0(\bar{a}, \bar{b}, \bar{c}) \ \Longleftrightarrow \\
&\text{ for some positive } \ \alpha \leq \beta_i, \ \exists a'_1, \ldots, a'_\alpha \in A, \\
&\bigwedge_{l=1}^\alpha 
\Big(a'_l \in \cl_\mcA(\bar{a}) \ \wedge \ 
a'_l \notin \cl_\mcA(\bar{b}, a'_1, \ldots, a'_{l-1})\Big) \ \text{ and } \\
& \neg \exists a''_1, \ldots, a''_\alpha \in A, \\
&\bigwedge_{l=1}^\alpha 
\Big(a''_l \in \cl_\mcA(\bar{a}) \ \wedge \ 
a''_l \notin \cl_\mcA(\bar{b}, \bar{c}, a''_1, \ldots, a''_{l-1})\Big).
\end{align*}
\end{lem}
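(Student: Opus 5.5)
The plan is to transfer the equivalence from models of $T$ to large members of $\mbC$ via the sentence $\lambda^0_{i,j,k}$, and then unfold the formulas $\rho^{0,\geq\gamma}$ inside the finite structure $\mcA$ using the partial closure operator.

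First, fix $i,j,k$. The sentence $\lambda^0_{i,j,k}$ is true in the countable model $\mcM$ of $T$, hence it belongs to $T$. By the definition of $T$ it therefore holds in every sufficiently large $\mcA\in\mbC$. Since $\indo$ on $\mcA$ was defined to mean $\mcA\models\theta^0_{i,j,k}$, we get, for such $\mcA$,
\[
\bar a \underset{\bar c}{\nindo} \bar b
\iff \mcA \models \neg\theta^0_{i,j,k}(\bar a,\bar b,\bar c)
\iff \mcA \models \bigvee_{\gamma=1}^{\beta_i}\Big(\rho^{0,\geq\gamma}_{i,k}(\bar a,\bar c)\wedge\neg\rho^{0,\geq\gamma}_{i,j+k}(\bar a,\bar b,\bar c)\Big).
\]

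Next, I would read off the meaning of $\rho^{0,\geq\alpha}$ in $\mcA$. This formula is built only from equalities and instances of $\xi_{i}$ and $\xi_{j+l-1}$ with at most $i+j+k+\beta_i$ arguments. By Assumption~\ref{assumptions on the closure}(4), if $|A|\geq n_{i+j+k+\beta_i}$ then $\mcA\models\xi_r(\bar d,e)$ exactly when $e\in\cl_\mcA(\bar d)$, and these closures are defined. So, for large $\mcA$, $\mcA\models\rho^{0,\geq\alpha}(\bar a,\bar d)$ says precisely that there are $a'_1,\dots,a'_\alpha\in A$ with $a'_l\in\cl_\mcA(\bar a)$ and $a'_l\notin\cl_\mcA(\bar d,a'_1,\dots,a'_l)$. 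The index in the displayed formula includes $z_l$ itself; I intend the intended reading $a'_1,\dots,a'_{l-1}$, matching \cite[Lemma~3.5]{KopInd}. Substituting $\bar d=\bar c$ for the first conjunct and $\bar d=\bar b\bar c$ for the second gives the two clauses of the lemma.

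The main obstacle is bookkeeping rather than mathematics, and there are two points to reconcile. First, the witnesses $a'_l$ range over elements of $A$ in the statement, whereas $\rk^0$ in $\mcM$ was characterised through $M_0$. Since the lemma is about the formulas $\rho^{0,\geq\alpha}$ themselves, this is harmless: the translation happens entirely inside first-order formulas, which mention only real elements. Second, the statement's first clause uses $\cl_\mcA(\bar b,\dots)$ where the formula has $\bar c$. I would follow the formula and use $\bar c$ there, flagging that the printed $\bar b$ should read $\bar c$. With these adjustments, the lemma follows directly by taking $\alpha=\gamma$ with $1\leq\alpha\leq\beta_i$ and making ``sufficiently large'' exceed both the threshold for $\lambda^0_{i,j,k}$ and $n_{i+j+k+\beta_i}$.
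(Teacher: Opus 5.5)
Your proposal is correct and is the paper's own argument: $\mcM\models\lambda^0_{i,j,k}$, so $\lambda^0_{i,j,k}\in T$ and hence holds in all sufficiently large $\mcA\in\mbC$, and unfolding $\neg\theta^0_{i,j,k}$ via Assumption~\ref{assumptions on the closure}(4) gives the lemma; you are also right that the first clause should read $a'_l\notin\cl_\mcA(\bar c,a'_1,\ldots,a'_{l-1})$, since $\lambda^0_{i,j,k}$ has $\rho^{0,\geq\gamma}_{i,k}(\bar a,\bar c)$ there (with $\bar b$ in that clause, the two clauses contradict each other when $\bar c$ is empty). Your other ``correction'' is not needed: $\xi_{j+l-1}$ has $j+l$ arguments with the last one being the element tested, so $\neg\xi_{j+l-1}(\bar y,z_1,\ldots,z_l)$ already says $z_l\notin\cl(\bar y,z_1,\ldots,z_{l-1})$, whereas your literal intermediate reading $a'_l\notin\cl_\mcA(\bar d,a'_1,\ldots,a'_l)$ would make $\rho^{0,\geq\alpha}$ unsatisfiable for $\alpha\geq 1$.
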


\noindent
Hence, for any fixed $i \in \mbbN$ the notion $\indo$ restricted to sequences
of length at most $i$ is defined in terms of $\cl_\mcA$ (as shown above) in all sufficiently large $\mcA \in \mbC$.

\begin{exam} {\rm
Let $\mbC$ be as in Example~\ref{continuation of example 1}
with $s = 1$, so all members of $\mbC$ are trees in which all leaves belong to level $t$.
For any fixed $i \in \mbbN$, if $\mcA \in \mbC$ is sufficiently large and $\bar{a}$, $\bar{b}$, and $\bar{c}$
are sequences of elements from $A$ of length at most $i$, then
$\bar{a} \underset{\bar{c}}{\indo} \bar{b}$ if and only if 
$\cl_\mcA(\bar{a}) \setminus \cl_\mcA(\bar{c}) = \cl_\mcA(\bar{a}) \setminus \cl_\mcA(\bar{b}C)$.
}\end{exam}

\subsection{The case $n > 0$}

Now suppose that $n > 0$.

If, for some $0 < m \in \mbbN$, $E$ is a $\es$-definable equivalence relation on $M^m$
and $\bar{a} \in M^m$, then $[\bar{a}]_E$ denotes the $E$-equivalence class of $\bar{a}$ and we also
view $[\bar{a}]_E$ as an imaginary element.

\begin{lem}\label{characterization of acl for imaginaries}
For $i = 1, \ldots, k$, let $E_i$ be a $\es$-definable equivalence relation on $M^{l_i}$.
For $\bar{a}_i \in M^{l_i}$, $i = 1, \ldots, k$, we have 
\[
[\bar{a}_{k+1}]_{E_{k+1}} \in \acl\meq\big([\bar{a}_1]_{E_1}, \ldots, [\bar{a}_k]_{E_k}\big)
\ \ \text{ if and only if}
\]
there is $m \in \mbbN$ such that there do {\rm not} exist 
$\bar{b}_1, \ldots, \bar{b}_m$ and, for $i = 1, \ldots, m$, $\bar{c}_{i, 1}, \ldots, \bar{c}_{i, k}$
such that 
$\tp(\bar{c}_{i, 1}, \ldots, \bar{c}_{i, k}, \bar{b}_i) = \tp(\bar{a}_1, \ldots, \bar{a}_k, \bar{a}_{k+1})$,
$\mcM \models \neg E(\bar{b}_i, \bar{b}_j)$ for $i \neq j$, and
$\mcM \models E(\bar{c}_{i, j}, \bar{a}_j)$ for all $i$ and $j$.
Moreover, the number $m$ only depends on $T$ and $\tp(\bar{a}_1, \ldots, \bar{a}_k, \bar{a}_{k+1})$.
\end{lem}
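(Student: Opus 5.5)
We will prove Lemma~\ref{characterization of acl for imaginaries} directly from the definition of algebraicity in $\mcM\meq$ together with $\omega$-categoricity, reading $E$ as $E_{k+1}$ in the statement. Write $e_j = [\bar{a}_j]_{E_j}$ for $j = 1, \ldots, k+1$, and let $p = \tp(\bar{a}_1, \ldots, \bar{a}_k, \bar{a}_{k+1})$. Since $T$ is $\omega$-categorical, $p$ is isolated by a single formula $\psi_p$. The key reformulation is that the set of $\mathrm{Aut}$-conjugates (equivalently, realizations of the type) of $e_{k+1}$ over $e_1, \ldots, e_k$ is exactly the set of classes $[\bar{b}]_{E_{k+1}}$ for which there are $\bar{c}_1, \ldots, \bar{c}_k$ with $\bar{c}_j E_j \bar{a}_j$ and $\tp(\bar{c}_1, \ldots, \bar{c}_k, \bar{b}) = p$.

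\textbf{Step 1: the reformulation.} For one direction, if $\tp(\bar{c}_1, \ldots, \bar{c}_k, \bar{b}) = p$ then, by $\omega$-categoricity (homogeneity of the countable model, or of a saturated model), there is an automorphism sending $\bar{a}_1 \ldots \bar{a}_{k+1}$ to $\bar{c}_1 \ldots \bar{c}_k \bar{b}$. This automorphism fixes each $e_j$ for $j \leq k$ (since $\bar{c}_j E_j \bar{a}_j$) and maps $e_{k+1}$ to $[\bar{b}]_{E_{k+1}}$. Conversely, any conjugate of $e_{k+1}$ over $e_1, \ldots, e_k$ arises this way, since an automorphism fixing $e_1, \ldots, e_k$ maps $\bar{a}_j$ into its own $E_j$-class. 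Hence the set $D$ of such classes is the orbit of $e_{k+1}$ over $e_1, \ldots, e_k$. Moreover, $D$ is definable over $e_1, \ldots, e_k$ by the formula $\exists \bar{z}_1 \ldots \bar{z}_k \big(\bigwedge_j E_j(\bar{z}_j, \bar{a}_j) \wedge \psi_p(\bar{z}_1, \ldots, \bar{z}_k, \bar{y})\big)$, which is invariant under replacing each $\bar{a}_j$ by an $E_j$-equivalent tuple.

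\textbf{Step 2: the equivalence.} By definition, $e_{k+1} \in \acl\meq(e_1, \ldots, e_k)$ iff some formula over $e_1, \ldots, e_k$ true of $e_{k+1}$ has finitely many solutions. In the $\omega$-categorical setting this is equivalent to the orbit $D$ being finite: orbits over finite sets are type-definable, hence the minimal definable sets containing $e_{k+1}$, and any definable set containing $e_{k+1}$ contains $D$. So, if $e_{k+1}$ is algebraic, $|D| < m$ for some $m$, and then no $m$ pairwise $E_{k+1}$-inequivalent $\bar{b}_i$ with witnesses $\bar{c}_{i,j}$ exist. Conversely, if for some $m$ such witnesses do not exist, then the definable set $D$ has fewer than $m$ elements and contains $e_{k+1}$, which gives algebraicity.

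\textbf{Step 3: uniformity.} The ``moreover'' part needs $m$ to depend only on $T$ and $p$. We take $m = |D| + 1$. Both $|D|$ and the statement ``there exist $m$ such witnesses'' are expressed by first-order formulas in $\bar{a}_1, \ldots, \bar{a}_{k+1}$, built from $\psi_p$ and the $E_j$. Their truth values are therefore determined by $p$, so the sentence asserting that there are exactly $|D|$ such classes lies in $T$ applied to $p$. I expect the only delicate point to be Step 2's use of homogeneity, namely that conjugacy coincides with realizing the same type over $\bar{a}$ modulo $E_j$. If needed, this can be handled by working in the countable model, which is $\omega$-homogeneous, and transferring the conclusion back, since $\acl\meq$ and the condition are both first-order.
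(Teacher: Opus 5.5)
Your proof is correct and follows essentially the same route as the paper's: both identify the classes $[\bar{b}]_{E_{k+1}}$ that admit witnesses $\bar{c}_{1}, \ldots, \bar{c}_{k}$ with the realizations of the type of $[\bar{a}_{k+1}]_{E_{k+1}}$ over $[\bar{a}_1]_{E_1}, \ldots, [\bar{a}_k]_{E_k}$ in $\mcM\meq$, and then use that algebraicity means there are only finitely many such realizations. Your version is more explicit in two places: Step~1 together with the definability of $D$ spells out what the paper leaves to ``properties of $\mcM\meq$'', and your observation that ``$|D| \geq m$'' is a formula in $\bar{a}_1, \ldots, \bar{a}_k$, hence decided by $p$ and $T$, proves the uniformity of $m$, which the paper states without argument. (The appeal to automorphisms of the countable model could even be dropped, because equal types of representatives already give equal types of the corresponding imaginaries.)
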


\noindent
{\bf Proof.}
It suffices to prove that
\[
[\bar{a}_{k+1}]_{E_{k+1}} \notin \acl\meq\big([\bar{a}_1]_{E_1}, \ldots, [\bar{a}_k]_{E_k}\big)
\]
if and only if
for all $m \in \mbbN$ there are
$\bar{b}_1, \ldots, \bar{b}_m$ and, for $i = 1, \ldots, m$, $\bar{c}_{i, 1}, \ldots, \bar{c}_{i, k}$
such that 
$\tp(\bar{c}_{i, 1}, \ldots, \bar{c}_{i, k}, \bar{b}_i) = \tp(\bar{a}_1, \ldots, \bar{a}_k, \bar{a}_{k+1})$,
$\mcM \models \neg E(\bar{b}_i, \bar{b}_j)$ for $i \neq j$, and
$\mcM \models E(\bar{c}_{i, j}, \bar{a}_j)$ for all $i$ and $j$.
If 
\(
[\bar{a}_{k+1}]_{E_{k+1}} \notin \acl\meq\big([\bar{a}_1]_{E_1}, \ldots, [\bar{a}_k]_{E_k}\big)
\)
then, for any $m \in \mbbN$, there are distinct imaginaries $a'_1, \ldots, a'_m \in M\meq$ such that (in $\mcM\meq$)
\[
\tp([\bar{a}_1]_{E_1}, \ldots, [\bar{a}_k]_{E_{k+1}}) = 
\tp([\bar{a}_1]_{E_1}, \ldots, [\bar{a}_k]_{E_k}, a'_i)
\] 
for all $i = 1, \ldots, m$.
By properties of $\mcM\meq$ the existence of 
$\bar{b}_1, \ldots, \bar{b}_m$ and, for $i = 1, \ldots, m$, $\bar{c}_{i, 1}, \ldots, \bar{c}_{i, k}$
as above follows.

Conversely, if for all $m \in \mbbN$ there are 
$\bar{b}_1, \ldots, \bar{b}_m$ and, for $i = 1, \ldots, m$, $\bar{c}_{i, 1}, \ldots, \bar{c}_{i, k}$
such that $\tp(\bar{c}_{i, 1}, \ldots, \bar{c}_{i, k}, \bar{b}_i) = \tp(\bar{a}_1, \ldots, \bar{a}_k, \bar{a}_{k+1})$,
$\mcM \models \neg E(\bar{b}_i, \bar{b}_j)$ for $i \neq j$, and
$\mcM \models E(\bar{c}_{i, j}, \bar{a}_j)$ for all $i$ and $j$, then clearly
$[\bar{a}_{k+1}]_{E_{k+1}} \notin  \acl\meq\big([\bar{a}_1]_{E_1}, \ldots, [\bar{a}_k]_{E_k}\big)$.
\hfill $\square$

\medskip

\noindent
For all $k, l_1, \ldots, l_{k+1} \in \mbbN$, let 
$p_1(\bar{x}_1, \ldots, \bar{x}_{k+1}), \ldots, p_s(\bar{x}_1, \ldots, \bar{x}_{k+1})$,
where $|\bar{x}_i| = l_i$, be an enumeration of all (up to equivalence) formulas that isolate
complete types over $\es$ in the variables $\bar{x}_1 \ldots \bar{x}_{k+1}$.
If, for $i = 1, \ldots, k+1$, $E_i$ is a $\es$-definable equivalence relation on $M^{l_i}$
and $m \in \mbbN$, we let 
$\xi_{\bar{E}}^m(\bar{x}_1, \ldots, \bar{x}_{k+1})$, where $\bar{E} = (E_1, \ldots, E_{k+1})$, denote the formula
\begin{align*}
&\bigwedge_{t=1}^s \bigg(
p_t(\bar{x}_1, \ldots, \bar{x}_{k+1}) \rightarrow 
\neg \exists \bar{u}_1, \ldots, \bar{u}_{m}, \bar{v}_{1, 1}, \ldots, \bar{v}_{1, k}, \ldots,
\bar{v}_{m, 1}, \ldots, \bar{v}_{m, k} 
\bigg[ \\
&\bigwedge_{i=1}^m 
p_t(\bar{v}_{i, 1}, \ldots, \bar{v}_{i, k}, \bar{u}_i) \ \wedge \ 
\bigwedge_{\substack{1 \leq i, j \leq m \\ i \neq j}} \neg E_{k+1}(\bar{u}_i, \bar{u}_j) \ \wedge \ 
\bigwedge_{i=1}^m \bigwedge_{j=1}^k E_j(\bar{v}_{i, j}, \bar{x}_j) \bigg] \bigg).
\end{align*}

\noindent
From Lemma~\ref{characterization of acl for imaginaries}
it follows that we have
\begin{enumerate}
\item[] $[\bar{a}_{k+1}]_{E_{k+1}} \in \acl\meq\big([\bar{a}_1]_{E_1}, \ldots, [\bar{a}_k]_{E_1}\big)$
if and only if there is $m \in \mbbN$ 
such that
$\mcM \models \xi_{\bar{E}}^m(\bar{a}_1, \ldots, \bar{a}_{k+1})$
(where $m$ depends only on $T$ and $\tp(\bar{a}_1, \ldots, \bar{a}_k, \bar{a}_{k+1})$).
\end{enumerate}

\noindent
Observe that if $E_i$ is the equivalence relation `$=$' on $M$  for all $i = 1, \ldots, k+1$
(so that we can let $\bar{x}_i$ be $x_i$ for all $i$),
then $\xi_{\bar{E}}(x_1, \ldots, x_{k+1})$ is equivalent (in models of $T$) to $\xi_k(x_1, \ldots, x_{k+1})$.

From the characterization of `$\rk^n(\bar{a} / \bar{b}) \geq \alpha$' above, where $\bar{a}$ and $\bar{b}$
are finite sequences of elements from $M$, it now follows that
\begin{enumerate}
\item[]
$\rk^n(\bar{a} / \bar{b}) \geq \alpha$ if and only if 
there are $\es$-definable equivalence relations $E_1, \ldots, E_\alpha$ (on $M^{l_i}$ for $i = 1, \ldots, \alpha$)
and $\bar{c}_1, \ldots, \bar{c}_\alpha$ such that 
\[
\mcM \models \bigwedge_{l=1}^\alpha \bigg(
\xi_{(=, \ldots, =, E_l)}(\bar{a}, \bar{c}_l) \ \wedge \ 
\neg \xi_{(=, \ldots, =, E_1, \ldots, E_l)}(\bar{b}, \bar{c}_1, \ldots, \bar{c}_l) \bigg)
\]
\end{enumerate}
Let $\rho_{i, j}^{n, \geq \alpha}(\bar{x}, \bar{y})$, where $|\bar{x}| = i$ and $|\bar{y}| = j$,
be the disjunction of all formulas as above where $E_1, \ldots, E_\alpha$ range
over all $\es$-definable equivalence relations on $M^s$ for $1 \leq s \leq n$.
Then, for $\bar{a} \in M^i$ and $\bar{b} \in M^j$,
\[
\rk^n(\bar{a} / \bar{b}) \geq \alpha \ \Longleftrightarrow \ \mcM \models \rho_{i, j}^{n, \geq \alpha}(\bar{a}, \bar{b})
\]
and 
\[
\rk^n(\bar{a} / \bar{b}) = \alpha \ \Longleftrightarrow \ 
\mcM \models \rho_{i, j}^{n, \geq \alpha}(\bar{a}, \bar{b}) \wedge
\neg\rho_{i, j}^{n, \geq \alpha+1}(\bar{a}, \bar{b}).
\]
Recall that $\bar{a} \underset{\bar{c}}{\nindn} \bar{b}$ if and only if
for some $\alpha \in \mbbN$, $\rk^n(\bar{a} / \bar{b}) \geq \alpha$ and 
$\rk^n(\bar{a} / \bar{b}\bar{c}) \not\geq \alpha$.
Moreover, this $\alpha$ is smaller than some $\beta \in \mbbN$ that is determined only by $T$
and $|\bar{a}|$.
Since every first-order sentence which is true in $\mcM$ is also true in all sufficiently large members of $\mbC$ we
get the following:

\begin{lem}\label{characterization of independence in C}
For all $i, j, k \in \mbbN$ with $i, j > 0$, sufficiently large $\mcA \in \mbC$ and finite sequences
 $\bar{a} \in A^i$, $\bar{b} \in A^j$, and $\bar{c} \in A^k$,
$\bar{a} \underset{\bar{c}}{\nindn} \bar{b}$ if and only if there is $\alpha \in \mbbN$ such that
$\mcA \models \rho_{i, j}^{n, \geq \alpha}(\bar{a}, \bar{b}) \wedge 
\neg\rho_{i, j+k}^{n, \geq \alpha}(\bar{a}, \bar{b}, \bar{c})$
(where $\alpha$ is smaller than some number that is determined only by $T$ and $i$).
\end{lem}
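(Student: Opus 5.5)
The plan is a transfer argument: establish the characterization inside a model $\mcM$ of $T$ as a single first-order sentence, and then pass that sentence to sufficiently large members of $\mbC$ using the defining property of $T$. Fix $i, j, k$ with $i, j > 0$, and let $\beta = \beta_i$ be the bound, depending only on $T$ and $i$, with $\rk^n(\bar{a}/\bar{d}) \leq \beta$ for every $\bar{a}$ of length $i$. For finite sequences in $\mcM$ we have $\bar{a} \underset{\bar{c}}{\indn} \bar{b}$ iff $\rk^n(\bar{a}/\bar{b}\bar{c}) = \rk^n(\bar{a}/\bar{c})$. Using the monotonicity $\rk^n(\bar{a}/\bar{b}\bar{c}) \leq \rk^n(\bar{a}/\bar{c})$ from \cite[Lemma~3.9]{KopInd}, the failure of independence is equivalent to the existence of some $\alpha$ with $1 \leq \alpha \leq \beta$ such that the rank over the smaller base is at least $\alpha$ while the rank over the enlarged base $\bar{b}\bar{c}$ is not at least $\alpha$. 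I will state this with the base sequences exactly as they occur in the formulas $\rho^{n,\geq\alpha}$ of the lemma, following the paper's convention from the recall just before it.

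The first step uses the equivalence shown above: $\rk^n(\bar{a}/\bar{d}) \geq \alpha$ iff $\mcM \models \rho^{n,\geq\alpha}_{i,|\bar{d}|}(\bar{a}, \bar{d})$. Here $\rho^{n,\geq\alpha}$ is a genuine first-order formula. It is a finite disjunction, because by $\omega$-categoricity there are only finitely many $\es$-definable equivalence relations on $M^s$ for $s \leq n$, and each $\xi_{\bar{E}}$ is realized as $\xi^m_{\bar{E}}$ for an $m$ bounded in terms of the finitely many complete types involved. With this, the following is true in $\mcM$, hence in $T$:
\[
\forall \bar{x}\bar{y}\bar{z}\Big(\theta^n_{i,j,k}(\bar{x},\bar{y},\bar{z}) \leftrightarrow \neg\bigvee_{\alpha=1}^{\beta}\big(\rho^{n,\geq\alpha}_{i,j}(\bar{x},\bar{y}) \wedge \neg\rho^{n,\geq\alpha}_{i,j+k}(\bar{x},\bar{y},\bar{z})\big)\Big).
\]
Since $\beta$ is finite, the disjunction over $\alpha$ is finite and this is a single sentence.

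The second step is the transfer. Because $T$ is the set of sentences true in all sufficiently large members of $\mbC$, this sentence holds in every sufficiently large $\mcA \in \mbC$. In such $\mcA$, $\indn$ was defined through $\theta^n_{i,j,k}$, so the sentence gives exactly the claimed equivalence, with $\alpha$ ranging below $\beta+1$. This matches the analogous argument already given for $n=0$.

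The main obstacle is bookkeeping rather than a new idea. First, I must check that $\rho^{n,\geq\alpha}$ and the bound $\beta$ depend only on $T$, $i$, $j$, $k$ and $\alpha$, and not on the particular tuples; this follows from $\omega$-categoricity, since there are finitely many types and finitely many equivalence relations. Second, I must confirm that the characterization of $\rk^n \geq \alpha$ via the $\xi_{\bar{E}}$ formulas, with imaginaries drawn from $M_n$, is correct. I would first verify this against \cite[Lemma~3.5]{KopInd} and Lemma~\ref{characterization of acl for imaginaries}, taking the ``moreover'' clause of the latter to choose a uniform $m$ for each of the finitely many types.
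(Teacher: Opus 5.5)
Your strategy is the same as the paper's: write the characterization of $\nindn$ in $\mcM$ as one first-order sentence in which $\alpha$ ranges only over $1,\ldots,\beta_i$, and transfer it to all sufficiently large $\mcA \in \mbC$. Putting the bound on $\alpha$ inside the sentence is necessary, since with unbounded $\alpha$ the threshold for ``sufficiently large'' would depend on $\alpha$.

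The step that fails is your claim that the displayed sentence is true in $\mcM$. By the definition you quote, $\bar a \underset{\bar c}{\indn} \bar b$ iff $\rk^n(\bar a/\bar b\bar c) = \rk^n(\bar a/\bar c)$, so the smaller base is $\bar c$, not $\bar b$. Your disjunct $\rho^{n,\geq\alpha}_{i,j}(\bar x,\bar y)\wedge\neg\rho^{n,\geq\alpha}_{i,j+k}(\bar x,\bar y,\bar z)$ instead says $\rk^n(\bar a/\bar b\bar c) < \rk^n(\bar a/\bar b)$, i.e.\ $\bar a \underset{\bar b}{\nindn} \bar c$, which is a different relation. Here is a concrete failure. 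Take $k=0$. Your right-hand side becomes $\neg\bigvee_\alpha\big(\rho^{n,\geq\alpha}_{i,j}(\bar x,\bar y)\wedge\neg\rho^{n,\geq\alpha}_{i,j}(\bar x,\bar y)\big)$, which is always true. Now pick $a \notin \acl_\mcM(\es)$, which exists because $\acl_\mcM(\es)$ is finite. Then $\rk^n(a/\es)\geq 1$, witnessed by $a$ itself, while $\rk^n(a/a)=0$, so $a \underset{\es}{\nindn} a$. Hence your sentence is false in $\mcM$, its negation lies in the complete theory $T$, and the biconditional fails for some tuples in every sufficiently large $\mcA$.

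The slip is inherited from the paper. The ``Recall'' sentence before the lemma, and hence the lemma as printed, has $\rk^n(\bar a/\bar b)$ where $\rk^n(\bar a/\bar c)$ is meant. Compare the $n=0$ case, where the paper correctly uses $\rho^{0,\geq\gamma}_{i,k}(\bar a,\bar c)\rightarrow\rho^{0,\geq\gamma}_{i,j+k}(\bar a,\bar b,\bar c)$. You noticed the mismatch (``the rank over the smaller base'') but then chose to follow the printed indexing. Instead, state and prove the corrected version, with $\rho^{n,\geq\alpha}_{i,k}(\bar x,\bar z)$ as the first conjunct. With that change your sentence is true in $\mcM$, by the definability of $\rk^n\geq\alpha$ and the monotonicity in \cite[Lemma~3.9]{KopInd}. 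The transfer then goes through exactly as in the paper. Your remarks on the finiteness of the disjunction defining $\rho^{n,\geq\alpha}$ and on a uniform $m$ for $\xi^m_{\bar E}$ correctly fill in details the paper leaves implicit.
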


\noindent
Since the formulas of the form $\rho_{i, j}^{n, \geq \alpha}(\bar{x}, \bar{y})$ have been explicitly described
the above lemma explains the meaning of $\indn$ in structures in $\mbC$.

\end{document}